\newtheorem{thm}{Theorem}[section]
\newtheorem{prop}[thm]{Proposition}
\newtheorem{lem}[thm]{Lemma}
\begin{document}

\author{Yury Savateev 
\inst{1}
\and
Daniyar Shamkanov
\inst{1}\inst{2}
}
\institute{National Research University Higher School of Economics
\and
Steklov Mathematical Institute of the Russian Academy of Sciences}

\title{Cut-elimination for the modal Grzegorczyk logic via non-well-founded proofs}
\date{}


\maketitle

\begin{abstract}
We present a sequent calculus for the modal Grzegorczyk logic $\mathsf{Grz}$ allowing non-well-founded proofs and obtain the cut-elimination theorem for it by constructing a continuous cut-elimination mapping acting on these proofs. \\\\
\textbf{Keywords:} non-well-founded proofs, Grzegorczyk logic, cut elimination.
\end{abstract}

\section{Introduction}
\label{s1}
The Grzegorczyk logic $\mathsf{Grz}$ is a well-known modal logic \cite{Maks}, which can be characterized by reflexive partially ordered Kripke frames without infinite ascending chains. This logic is complete w.r.t. the arithmetical semantics, where the modal connective $\Box$ corresponds to the strong provability operator \textit{"... is true and provable"} in Peano arithmetic.

Recently a new proof-theoretic description for the G\"{o}del-L\"{o}b provability logic $\mathsf{GL}$ in the form of a sequent calculus  allowing so-called cyclic, or circular, proofs was given in \cite{Sham}. 
A feature of cyclic proofs is that the graph underlying a proof is not a finite tree but is allowed to contain cycles. Since $\mathsf{GL}$ and $\mathsf{Grz}$ are closely connected, we wonder whether cyclic and, more generally, non-well-founded proofs can be fruitfully considered in the case of $\mathsf{Grz}$. 

In this paper, we present a sequent calculus for the modal Grzegorczyk logic allowing non-well-founded proofs and obtain the cut-elimination theorem for it by constructing a continuous cut-elimination mapping acting on these proofs. 
  
In Section \ref{s2}, we recall an ordinary sequent calculus for $\mathsf{Grz}$. In Section \ref{s3} we introduce the infinitary proof system $\mathsf{Grz_{\infty}}$. In Section \ref{s4} we establish the cut elimination result for $\mathsf{Grz_\infty}$ syntactically. Then, in Section \ref{s5} we prove the equivalence of the two systems. In Section \ref{s6} we discuss possible applications of the new system.

\section{Preliminaries}
\label{s2}
In this section we recall the modal Grzegorczyk logic $\mathsf{Grz}$ and define an ordinary sequent calculus for it.
  
\textit{Formulas} of $\mathsf{Grz}$, denoted by $A$, $B$, $C$, are built up as follows:
$$ A ::= \bot \,\,|\,\, p \,\,|\,\, (A \to A) \,\,|\,\, \Box A \;, $$
where $p$ stands for atomic propositions. 
We treat other boolean connectives and the modal operator $\Diamond$ as abbreviations:
\begin{gather*}
\neg A := A\to \bot,\qquad\top := \neg \bot,\qquad A\wedge B := \neg (A\to \neg B),
\\
A\vee B := (\neg A\to B),\qquad\Diamond A := \neg\Box \neg A.
\end{gather*}

The Hilbert-style axiomatization of $\mathsf{Grz}$ is given by the following axioms and inference rules:

\textit{Axioms:}
\begin{itemize}
\item[(i)] Boolean tautologies;
\item[(ii)] $\Box (A \rightarrow B) \rightarrow (\Box A \rightarrow \Box B)$;
\item[(iii)] $\Box A \rightarrow \Box \Box A$;
\item[(iv)] $\Box A \rightarrow A$;
\item[(v)] $\Box(\Box(A \rightarrow \Box A) \rightarrow A) \rightarrow \Box A$.
\end{itemize}

\textit{Rules:} modus ponens, $A / \Box A$. \\

Now we define an ordinary sequent calculus for $\mathsf{Grz}$. A \textit{sequent} is an expression of the form $\Gamma \Rightarrow \Delta$, where $\Gamma$ and~$\Delta$ are finite multisets of formulas. For a multiset of formulas $\Gamma = A_1,\dotsc, A_n$, we set $\Box \Gamma := \Box A_1,\dotsc, \Box A_n$.

The system $\mathsf{Grz_{Seq}}$, which is a variant of the sequent calculus from \cite{Borga}, is defined by the following initial sequents and inference rules: 
\begin{gather*}
\AxiomC{ $\Gamma, A \Rightarrow A, \Delta $ ,}
\DisplayProof \qquad
\AxiomC{ $\Gamma , \bot \Rightarrow \Delta $ ,}
\DisplayProof
\end{gather*}
\begin{align*}
&
\AxiomC{$\Gamma , B \Rightarrow \Delta $}
\AxiomC{$\Gamma \Rightarrow A,\Delta $}
\LeftLabel{$\mathsf{\to_L}$}
\BinaryInfC{$\Gamma , A \to B \Rightarrow \Delta$}
\DisplayProof\;,& &
\AxiomC{$\Gamma , A \Rightarrow B, \Delta $}
\LeftLabel{$\mathsf{\to_R}$}
\UnaryInfC{$\Gamma \Rightarrow A \to B, \Delta$}
\DisplayProof\;,\\\\
&\AxiomC{$\Gamma, B, \Box B \Rightarrow \Delta $}
\LeftLabel{$\mathsf{refl}$}
\UnaryInfC{$\Gamma , \Box B \Rightarrow \Delta$}
\DisplayProof\;,& &
\AxiomC{$ \Box\Pi, \Box(A\to\Box A) \Rightarrow A$}
\LeftLabel{$\mathsf{\Box_{Grz}}$}
\UnaryInfC{$\Gamma, \Box \Pi \Rightarrow \Box A ,\Delta $}
\DisplayProof \;.
\end{align*}
\begin{center}
\textbf{Fig. 1.} The system $\mathsf{Grz_{Seq}}$
\end{center}

The cut rule has the form
\begin{gather*}
\AxiomC{$\Gamma\Rightarrow A,\Delta$}
\AxiomC{$\Gamma,A\Rightarrow\Delta$}
\LeftLabel{$\mathsf{cut}$}
\RightLabel{ ,}
\BinaryInfC{$\Gamma\Rightarrow\Delta$}
\DisplayProof
\end{gather*}
where $A$ is called the \emph{cut formula} of the given inference.

\begin{lem} \label{prop}
$\mathsf{Grz_{Seq}} + \mathsf{cut}\vdash \Gamma\Rightarrow\Delta$ if and only if $\mathsf{Grz} \vdash \bigwedge\Gamma\to\bigvee\Delta $. 
\end{lem}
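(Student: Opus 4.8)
The plan is to prove the equivalence $\mathsf{Grz_{Seq}} + \mathsf{cut} \vdash \Gamma \Rightarrow \Delta$ iff $\mathsf{Grz} \vdash \bigwedge \Gamma \to \bigvee \Delta$ by establishing the two implications separately, treating this as a standard soundness-and-completeness argument connecting the Hilbert-style axiomatization with the sequent calculus.

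For the direction from the sequent calculus to the Hilbert system (soundness), I would argue by induction on the height of the derivation in $\mathsf{Grz_{Seq}} + \mathsf{cut}$. For each initial sequent and each inference rule, I need to check that the corresponding formula $\bigwedge \Gamma \to \bigvee \Delta$ is derivable in $\mathsf{Grz}$, given that the premises' translations are. The propositional rules ($\to_L$, $\to_R$, $\mathsf{cut}$) and the initial sequents reduce to boolean tautologies combined with modus ponens, so these are routine. The $\mathsf{refl}$ rule corresponds to using axiom (iv), $\Box B \to B$, together with axiom (iii) $\Box B \to \Box\Box B$ to recover $B \wedge \Box B$ from $\Box B$. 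The genuinely modal step is the $\mathsf{\Box_{Grz}}$ rule: from a derivation of $\bigwedge \Box\Pi \wedge \Box(A \to \Box A) \to A$ I must obtain $\bigwedge \Gamma \wedge \bigwedge \Box\Pi \to \bigvee \Delta \vee \Box A$. Here I would apply necessitation to the premise's translation, then use the distribution axiom (ii) and axiom (iii) to push the boxes in, and finally invoke the characteristic Grzegorczyk axiom (v) to conclude $\Box A$. This last point is where the argument requires real care.

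For the converse direction (completeness), I would show that every axiom (i)--(v) of $\mathsf{Grz}$ yields a provable sequent $\Rightarrow A$ in $\mathsf{Grz_{Seq}} + \mathsf{cut}$, and that the rules modus ponens and necessitation are admissible at the level of provable sequents. The boolean tautologies and the $\mathsf{K}$, $\mathsf{4}$, and $\mathsf{T}$ axioms (ii)--(iv) admit routine derivations using the propositional rules together with $\mathsf{refl}$ and $\mathsf{\Box_{Grz}}$. Modus ponens is simulated using $\mathsf{cut}$ on the implication formula, which is precisely why the cut rule must be present in this equivalence. Necessitation, passing from $\Rightarrow A$ to $\Rightarrow \Box A$, is handled through the $\mathsf{\Box_{Grz}}$ rule with empty $\Pi$.

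The main obstacle will be deriving the Grzegorczyk axiom (v), $\Box(\Box(A \to \Box A) \to A) \to \Box A$, inside $\mathsf{Grz_{Seq}} + \mathsf{cut}$, since this axiom encodes the full strength of the logic and its sequent derivation must route through the $\mathsf{\Box_{Grz}}$ rule in a nontrivial way. I would attempt to build this derivation by applying $\mathsf{\Box_{Grz}}$ with $\Pi$ containing the boxed hypothesis and then reducing the premise $\Box(\Box(A \to \Box A) \to A), \Box(A \to \Box A) \Rightarrow A$ using $\mathsf{refl}$ and $\to_L$. Matching the side formulas $\Gamma, \Delta$ and the boxed context correctly against the shape forced by $\mathsf{\Box_{Grz}}$ is the delicate part, and I expect most of the technical effort to concentrate there; the remaining cases are comparatively mechanical once the modal machinery is set up.
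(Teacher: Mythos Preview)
Your proposal is correct and is precisely the unpacking of what the paper records in a single line as ``Standard transformations of proofs.'' The two directions you sketch---induction on the sequent derivation for soundness, and deriving each Hilbert axiom plus simulating modus ponens via $\mathsf{cut}$ and necessitation via $\mathsf{\Box_{Grz}}$ for completeness---are exactly the standard argument the paper alludes to without detail; your identification of axiom~(v) and the $\mathsf{\Box_{Grz}}$ step as the only places requiring any thought is accurate, and the derivation you outline for~(v) (apply $\mathsf{\Box_{Grz}}$, then $\mathsf{refl}$, then $\mathsf{\to_L}$ to reach initial sequents) goes through without $\mathsf{cut}$.
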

\begin{proof}
Standard transformations of proofs.
\end{proof}

\begin{thm}\label{cutelimgrz}
If $\mathsf{Grz_{Seq}} + \mathsf{cut}\vdash \Gamma\Rightarrow\Delta$, then $\mathsf{Grz_{Seq}} \vdash \Gamma\Rightarrow\Delta$. 

\end{thm}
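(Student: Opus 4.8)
The plan is to prove this indirectly, routing the argument through the non-well-founded calculus $\mathsf{Grz_\infty}$ introduced in Section \ref{s3}, rather than attempting a direct Gentzen-style elimination. A direct induction on the cut formula together with the proof height is hopeless here because of the modal rule $\mathsf{\Box_{Grz}}$: its premise $\Box\Pi,\Box(A\to\Box A)\Rightarrow A$ introduces the formula $\Box(A\to\Box A)$, which is not a subformula of the conclusion, so when a cut on $\Box A$ is permuted above an application of $\mathsf{\Box_{Grz}}$ the usual complexity measures fail to decrease. This self-referential feature is exactly what motivates passing to proofs that are allowed to be infinite.

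The argument then proceeds in three steps. First, I would embed any $\mathsf{Grz_{Seq}}+\mathsf{cut}$ derivation of $\Gamma\Rightarrow\Delta$ into the infinitary system, obtaining a (possibly cut-containing) $\mathsf{Grz_\infty}$ proof of the same sequent, with the finitary applications of $\mathsf{\Box_{Grz}}$ and $\mathsf{refl}$ unfolded into the corresponding non-well-founded pattern. Second, I would invoke the cut-elimination theorem for $\mathsf{Grz_\infty}$ (Section \ref{s4}) to replace this proof by a cut-free non-well-founded proof of $\Gamma\Rightarrow\Delta$. Third, I would fold the cut-free $\mathsf{Grz_\infty}$ proof back into a finitary cut-free $\mathsf{Grz_{Seq}}$ derivation, using the equivalence of the two systems established in Section \ref{s5} (of which Lemma \ref{prop} is the finitary, Hilbert-side anchor).

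I expect the main obstacle to be the second step, the cut-elimination for $\mathsf{Grz_\infty}$ itself. Since the proofs are non-well-founded, one cannot remove cuts by induction on proof height; instead the idea is to fix the local cut-reduction steps and assemble them into a single cut-elimination operator defined by corecursion on the \emph{output} proof, and then to verify that this operator is continuous with respect to the natural metric on non-well-founded proofs (two proofs being close when they agree up to large depth). Continuity guarantees that the corecursive definition is productive, so that the operator is total and its limit is again a genuine, cut-free non-well-founded proof. The other delicate point, in the folding step, is showing that a cut-free infinitary proof can be taken \emph{regular}, with only finitely many distinct subtrees: here the subformula property of the cut-free rules bounds the sequents that can occur, and a König-style finiteness argument lets the proof collapse to a finite (cyclic, and then ordinary) $\mathsf{Grz_{Seq}}$ derivation.
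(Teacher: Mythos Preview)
Your three-step architecture --- embed into $\mathsf{Grz_\infty}+\mathsf{cut}$, eliminate cuts there, translate the cut-free $\infty$-proof back --- is exactly the paper's route: Theorem~\ref{cutelimgrz} is obtained as the composite of Theorems~\ref{seqtoinfcut}, \ref{infcuttoinf}, and~\ref{inftoseq}. Your description of step~2 (a corecursively defined cut-elimination operator, justified via a metric on $\infty$-proofs) also matches the paper's method, which realises this as the Banach fixed point of a contractive operator on the space $\mathcal{N}$ of root-preserving nonexpansive maps.

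The divergence is in how you propose to carry out the third step. The paper does \emph{not} pass through regularity of cut-free $\infty$-proofs; Section~\ref{s6} explicitly lists the existence of regular proofs as a conjecture. Instead, Lemma~\ref{translation} proves directly that $\mathsf{Grz_\infty}\vdash\Gamma\Rightarrow\Delta$ implies $\mathsf{Grz_{Seq}}\vdash\Lambda^\ast,\Gamma\Rightarrow\Delta$ for every finite set $\Lambda$, where $\Lambda^\ast=\{\Box(A\to\Box A)\mid A\in\Lambda\}$. The induction is on $\lvert Sub(\Gamma\Rightarrow\Delta)\setminus\Lambda\rvert$ with a subinduction on the local height $\lvert\pi\rvert$; at an application of $(\Box)$ with principal formula $\Box A$ one either already has $A\in\Lambda$ (and closes using $\mathsf{refl}$ and $\to_L$), or adds $A$ to $\Lambda$, invokes the outer induction hypothesis on the right premise, and applies $\Box_{\mathsf{Grz}}$. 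Taking $\Lambda=\varnothing$ gives Theorem~\ref{inftoseq}.

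Your regularity sketch has a gap as stated: the subformula property bounds the \emph{formulas} occurring in a cut-free $\infty$-proof, but sequents here are multisets, and rules such as $\mathsf{refl}$ increase multiplicities, so the set of sequents along a branch need not be finite and a K\"onig-style argument does not go through without further work (admissibility of contraction and a bound on multiplicities in some chosen proof). Even granting regularity, you would still owe an explanation of how a cyclic $\mathsf{Grz_\infty}$ proof is converted into a $\mathsf{Grz_{Seq}}$ derivation, since the two-premise rule $(\Box)$ of $\mathsf{Grz_\infty}$ is not an instance of $\Box_{\mathsf{Grz}}$; the $\Lambda^\ast$ bookkeeping in Lemma~\ref{translation} is precisely the device that bridges this gap.
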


A syntactic cut-elimination for the logic $\mathsf{Grz}$ was obtained by M.~Borga and P.~Gentilini in \cite{Borga}. 
In this paper, we will give another proof of this cut-elimination theorem in the next sections.

\section{Non-well-founded proofs}
\label{s3}
Now we define a sequent calculus for the logic $\mathsf{Grz}$ allowing non-well-founded proofs. The cut-elimination theorem for it will be proved in the next section. 

Inference rules and initial sequents of the sequent calculus $\mathsf{Grz_\infty}$ have the following form:
\begin{gather*}
\AxiomC{ $\Gamma, p \Rightarrow p, \Delta $ ,}
\DisplayProof\qquad
\AxiomC{ $\Gamma , \bot \Rightarrow  \Delta$ ,}
\DisplayProof \\
\AxiomC{$\Gamma , B \Rightarrow  \Delta$}
\AxiomC{$\Gamma \Rightarrow  A, \Delta$}
\LeftLabel{$\mathsf{\rightarrow_L}$}
\BinaryInfC{$\Gamma , A \rightarrow B \Rightarrow  \Delta$}
\DisplayProof \;,\qquad
\AxiomC{$\Gamma, A \Rightarrow  B ,\Delta$}
\LeftLabel{$\mathsf{\rightarrow_R}$}
\UnaryInfC{$\Gamma \Rightarrow  A \rightarrow B ,\Delta$}
\DisplayProof \;,\\
\AxiomC{$\Gamma, A, \Box A \Rightarrow \Delta$}
\LeftLabel{$\mathsf{refl}$}
\UnaryInfC{$\Gamma ,\Box A \Rightarrow \Delta$}
\DisplayProof \;,\qquad
\AxiomC{$\Gamma, \Box \Pi \Rightarrow A, \Delta$}
\AxiomC{$\Box \Pi \Rightarrow A$}
\LeftLabel{$\mathsf{\Box}$}
\BinaryInfC{$\Gamma, \Box \Pi \Rightarrow \Box A, \Delta$}
\DisplayProof \;.
\end{gather*}
\begin{center}
\textbf{Fig. 2.} The system $\mathsf{Grz}_\infty$
\end{center}
The system $\mathsf{Grz}_{\infty}+\mathsf{cut}$ is defined by adding the rule ($\mathsf{cut}$) to the system $\mathsf{Grz_\infty}$.
An \emph{$\infty$--proof} in $\mathsf{Grz}_\infty$ ($\mathsf{Grz}_{\infty}+\mathsf{cut}$) is a (possibly infinite) tree whose nodes are marked by
sequents and whose leaves are marked by initial sequents and that is constructed according to the rules of the sequent calculus. In addition, every infinite branch in an $\infty$--proof must pass through a right premise of the rule $\Box$ infinitely many times. A sequent $\Gamma \Rightarrow \Delta$ is \emph{provable} in $\mathsf{Grz}_\infty$ ($\mathsf{Grz}_{\infty}+\mathsf{cut}$) if there is an $\infty$--proof in $\mathsf{Grz}_\infty$ ($\mathsf{Grz}_{\infty}+\mathsf{cut}$) with the root marked by $\Gamma \Rightarrow \Delta$.

The \emph{main fragment} of an $\infty$--proof is a finite tree obtained from the $\infty$--proof by cutting every infinite branch at the nearest to the root right premise of the rule ($\Box$).
The \emph{local height $\lvert \pi \rvert$ of an $\infty$--proof $\pi$} is the length of the longest branch in its main fragment. An $\infty$--proof only consisting of an initial sequent has height 0.

For instance, consider an $\infty$--proof of the sequent $\Box(\Box(p \rightarrow \Box p) \rightarrow p) \Rightarrow p$: 

\begin{gather*}
\AxiomC{\textsf{Ax}}
\noLine
\UnaryInfC{$ F, p\Rightarrow p$}
\AxiomC{\textsf{Ax}}
\noLine
\UnaryInfC{$ F,p\Rightarrow \Box p,p$}
\LeftLabel{$\mathsf{\to_R}$}
\UnaryInfC{$ F \Rightarrow p\to\Box p,p$}
\AxiomC{\textsf{Ax}}
\noLine
\UnaryInfC{$p, F \Rightarrow p$}
\AxiomC{$\vdots$}
\noLine
\UnaryInfC{$ F \Rightarrow p$} 
\LeftLabel{$\mathsf{\Box}$} 
\BinaryInfC{$p, F \Rightarrow \Box p$}
\LeftLabel{$\mathsf{}\to_R$}
\UnaryInfC{$ F \Rightarrow p\to\Box p$}
\LeftLabel{$\mathsf{\Box}$} 
\BinaryInfC{$ F \Rightarrow \Box(p\to \Box p),p$} 
\LeftLabel{$\mathsf{\to_L}$}
\BinaryInfC{$\Box(p \rightarrow \Box p) \rightarrow p, F \Rightarrow p$}
\LeftLabel{$\mathsf{refl}$}
\RightLabel{ ,} 
\UnaryInfC{$F \Rightarrow p$}
\DisplayProof
\end{gather*}
where $F=\Box(\Box(p \rightarrow \Box p) \rightarrow p) $. 
The local height of this $\infty$--proof equals to 4 and its main fragment has the form
\begin{gather*}
\AxiomC{\textsf{Ax}}
\noLine
\UnaryInfC{$ F, p\Rightarrow p$}
\AxiomC{\textsf{Ax}}
\noLine
\UnaryInfC{$ F,p\Rightarrow \Box p,p$}
\LeftLabel{$\mathsf{\to_R}$}
\UnaryInfC{$ F \Rightarrow p\to\Box p,p$}
\AxiomC{\qquad \qquad \qquad}
\LeftLabel{$\mathsf{\Box}$} 
\BinaryInfC{$ F \Rightarrow \Box(p\to \Box p),p$} 
\LeftLabel{$\mathsf{\to_L}$}
\BinaryInfC{$\Box(p \rightarrow \Box p) \rightarrow p, F \Rightarrow p$}
\LeftLabel{$\mathsf{refl}$}
\RightLabel{ .} 
\UnaryInfC{$F \Rightarrow p $}
\DisplayProof
\end{gather*}

By $\mathcal P$ denote the set of all $\infty$-proofs in $\mathsf{Grz}_{\infty} +\mathsf{cut} $. For $n \in \mathbb{N}$, we define binary relations $\sim_n$ on $\mathcal P$ by simultaneous induction:
\begin{enumerate}
\item $\pi \sim_0 \tau$ for any $\pi, \tau$;
\item if $\lvert \pi \rvert =0$, then $\pi \sim_n \pi$;
\item if $\pi$ and $\tau$ are obtained by the same instance of inference rules ($\mathsf{\to_L}$), ($\mathsf{cut}$) from $\pi^\prime$, $\pi^{\prime\prime}$ and $\tau^\prime$, $\tau^{\prime\prime}$, where $\pi^\prime \sim_n \tau^\prime$ and $\pi^{\prime\prime} \sim_n \tau^{\prime\prime}$, then $\pi \sim_{n} \tau$;
\item if $\pi$ and $\tau$ are obtained by the same instance of inference rules ($\mathsf{\to_R}$), ($\mathsf{refl}$) from $\pi^\prime$ and $\tau^\prime$, where $\pi^\prime \sim_n \tau^\prime$, then $\pi \sim_{n} \tau$; 
\item if $\pi$ and $\tau$ are obtained by the same instance of an inference rule ($\mathsf{\Box}$) from $\pi^\prime$, $\pi^{\prime\prime}$ and $\tau^\prime$, $\tau^{\prime\prime}$, where $\pi^\prime$, $\tau^\prime$ are $\infty$-proofs for the left premises of ($\mathsf{\Box}$), and $\pi^\prime \sim_{n+1} \tau^\prime$, $\pi^{\prime\prime} \sim_n \tau^{\prime\prime}$, then $\pi \sim_{n+1} \tau$. 
\end{enumerate}
Notice that $\pi \sim_1 \tau$ if and only if $\pi$ and $\tau$ have the same main fragment.
\begin{lem}
For any $n \in \mathbb{N}$, we have that
\begin{enumerate}
\item the relation $\sim_n$ is an equivalence relation;
\item the relation $\sim_{n+1}$ is finer than the relation $\sim_n$.
\end{enumerate}
In addition, the intersection of all relations $ \sim_n $ is exactly the equality relation over $\mathcal P$.
\end{lem}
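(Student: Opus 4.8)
The plan is to treat all three claims by induction, the essential subtlety being that clause~5 defines $\sim_{n+1}$ partly in terms of $\sim_{n+1}$ itself (on the left premises of $\Box$). This recursion is nonetheless well founded: the left premise of a $\Box$-inference and the premises of every other rule lie strictly higher in the main fragment, so their local height is strictly smaller, while a $\Box$ right premise lowers the index from $n+1$ to $n$. I would therefore run every induction lexicographically on the pair $(n,\lvert\pi\rvert)$, the index taking precedence. The one structural fact I would extract first and reuse is that, for $n\ge 1$, whenever $\pi\sim_n\tau$ holds, either $\lvert\pi\rvert=\lvert\tau\rvert=0$ and $\pi=\tau$, or $\pi$ and $\tau$ end with the same instance of the same rule; this is immediate since clauses~2--5 are mutually exclusive according to the last rule applied.

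For claim~1 I would establish reflexivity, symmetry and transitivity separately. Reflexivity, $\pi\sim_n\pi$, follows by the lexicographic induction: the case $\lvert\pi\rvert=0$ is clause~2, and otherwise one applies the induction hypothesis to the premises (with the index dropped to $n-1$ on a $\Box$ right premise, covered by the outer induction on the index) and reassembles with the matching clause. Symmetry and transitivity use the structural fact above: from $\pi\sim_n\tau$ (and $\tau\sim_n\rho$) the proofs share a last rule instance, so one applies the induction hypothesis premise-by-premise and closes with the same clause. The only point needing care is that on a $\Box$ right premise the index is $n-1$, so these arguments must be threaded through the induction on the index.

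Claim~2, that $\sim_{n+1}\subseteq\sim_n$, I would again prove by induction on $n$ with an inner induction on local height. For $n=0$ it is trivial because $\sim_0$ is total. For $n\ge 1$, given $\pi\sim_{n+1}\tau$ I case on the last rule: for every non-$\Box$ rule the premises are related by $\sim_{n+1}$, hence by the inner hypothesis by $\sim_n$, and the same clause rebuilds $\pi\sim_n\tau$; for $\Box$, the left premises pass from $\sim_{n+1}$ to $\sim_n$ by the inner hypothesis, while the right premises, related by $\sim_n$, are related by $\sim_{n-1}$ by the \emph{outer} hypothesis (the statement one index lower), and clause~5 yields $\pi\sim_n\tau$.

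Finally, for the intersection claim, reflexivity from claim~1 already gives the inclusion of the equality relation in $\bigcap_n\sim_n$. For the converse I would unfold the definition to observe that, for $n\ge 1$, $\pi\sim_n\tau$ holds exactly when $\pi$ and $\tau$ agree on every node reachable from the root by crossing fewer than $n$ right premises of $\Box$ (for $n=1$ this is the stated fact that $\sim_1$ means equal main fragments, each right premise decrementing the index and index $0$ imposing no further constraint). Since every node of an $\infty$-proof is separated from the root by only finitely many such premises, two proofs agreeing for all $n$ agree everywhere, i.e.\ are equal: if $\pi\ne\tau$, one picks a node of least $\Box$-depth $d$ carrying a different sequent or rule instance, and then $\pi\not\sim_{d+1}\tau$, contradicting membership in the intersection. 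The main obstacle throughout is bookkeeping the two-dimensional induction so that the index-lowering on $\Box$ right premises is always discharged by the outer induction; once the measure $(n,\lvert\pi\rvert)$ is fixed, each case is routine.
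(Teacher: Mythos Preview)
The paper states this lemma without proof, treating it as routine. Your proposal is correct and is the natural way to carry out the omitted details: the lexicographic induction on $(n,\lvert\pi\rvert)$ is exactly what is needed to handle clause~5, since the left premise of $\Box$ keeps the index but strictly drops the local height while the right premise strictly drops the index. Your structural observation (for $n\ge 1$, related proofs share their last rule instance) is the right lemma to isolate, and with it the equivalence properties and the refinement $\sim_{n+1}\subseteq\sim_n$ go through as you describe. Your reading of $\sim_n$ as ``agreement up to $\Box$-depth $<n$'' matches the paper's remark that $\sim_1$ is equality of main fragments, and the intersection argument via a node of least $\Box$-depth witnessing a difference is sound because every node of an $\infty$-proof sits at finite $\Box$-depth. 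There is nothing to compare against in the paper beyond this; your write-up simply fills in what the authors left implicit.
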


Now we define a sequence $\mathcal{P}_n$ of subsets of $\mathcal{P}$ by simultaneous induction:
\begin{enumerate}
\item $\pi \in \mathcal{P}_0$ for any $\pi$;
\item if $\lvert \pi \rvert =0$, then $\pi \in \mathcal{P}_n$;
\item if $\pi$ is obtained by an instance of an inference rule ($\mathsf{\to_L}$) from $\pi^\prime$ and $\pi^{\prime\prime}$, where $\pi^\prime, \pi^{\prime\prime} \in \mathcal{P}_n$, then $\pi \in \mathcal{P}_n$;
\item if $\pi$ is obtained by an instance of inference rules ($\mathsf{\to_R}$), ($\mathsf{refl}$) from $\pi^\prime$, where $\pi^\prime \in \mathcal{P}_n$, then $\pi \in \mathcal{P}_n$; 
\item if $\pi$ is obtained by an instance of an inference rule ($\mathsf{\Box}$) from $\pi^\prime$ and $\pi^{\prime\prime}$, where $\pi^\prime$ is an $\infty$-proof for the left premise of ($\mathsf{\Box}$), and $\pi^\prime \in \mathcal{P}_{n+1}$, $\pi^{\prime\prime} \in \mathcal{P}_n$, then $\pi \in \mathcal{P}_{n+1}$. 
\end{enumerate}
Notice that $\mathcal{P}_0= \mathcal{P}$ and $\mathcal{P}_1$ consists of the $\infty$-proofs that do not contain the cut rule in their main fragment.
\begin{lem}
We have that $\mathcal{P}_n \subset \mathcal{P}_{n+1}$ for any $n \in \mathbb{N}$. In addition, the intersection of all sets $ \mathcal{P}_n $ consists exactly of the $\infty$-proofs in $\mathsf{Grz}_{\infty}$. 
\end{lem}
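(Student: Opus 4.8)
The plan is to prove the two assertions of the lemma separately, both by inductions that follow the inductive definition of the family $\{\mathcal{P}_n\}$. For the inclusion $\mathcal{P}_n \subset \mathcal{P}_{n+1}$ I would use a double induction: an outer induction on $n$ and, for each fixed $n$, an inner induction on the local height $\lvert\pi\rvert$ (equivalently, on the structure of the main fragment of $\pi$). The point of separating the two recursions is that the clauses defining the $\mathcal{P}_n$ are index-preserving for the propositional and reflexivity rules but shift the index at the rule $(\Box)$, so the two kinds of recursion have to be tracked independently.

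The inner induction handles the inferences that remain inside the main fragment. A proof of local height $0$ is an initial sequent and belongs to every $\mathcal{P}_n$ by clause (2). If the last inference is $(\to_L)$, $(\to_R)$ or $(\mathsf{refl})$, then clauses (3)--(4) attach the same index to premises and conclusion, so applying the inner induction hypothesis to the immediate subproofs and re-applying the clause closes the case. The rule $(\Box)$ is the only delicate point, and the main obstacle: by clause (5) the conclusion and the left premise carry one index while the right premise carries the neighbouring one, and the right premise is exactly the leaf at which the main fragment is truncated, so it is invisible to the inner structural induction. To transfer membership across $(\Box)$ one must therefore feed in the outer induction hypothesis on $n$ for that right-premise subproof, while the left premise is dispatched by the inner hypothesis; organising this index bookkeeping correctly is the crux of the argument.

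For the second assertion I would prove the two inclusions underlying the equality. Every $\infty$-proof of the cut-free system $\mathsf{Grz}_\infty$ belongs to all $\mathcal{P}_n$: this reuses the induction above, since a cut-free proof satisfies the premise conditions of clauses (3)--(5) at each index, the right-premise subproof in clause (5) being supplied by the outer induction on $n$. Conversely, a proof that does contain an application of $(\mathsf{cut})$ must drop out of some $\mathcal{P}_N$: reading off the number of right premises of $(\Box)$ lying between the root and that cut, one sees that for $N$ larger than this number none of the clauses (there being no clause for $(\mathsf{cut})$ among them) can witness membership in $\mathcal{P}_N$, so the proof is excluded. Intersecting over all $n$ therefore retains exactly the cut-free proofs, i.e. the $\infty$-proofs of $\mathsf{Grz}_\infty$. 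As already indicated, the one genuinely delicate step in the whole development is the passage through $(\Box)$, where the index shift meets the truncation of the main fragment and the structural and numerical inductions must be combined.
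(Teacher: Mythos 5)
There is a genuine problem, and it sits exactly at the point you flagged but did not carry out. The inclusion in the direction you set out to prove, $\mathcal{P}_n \subset \mathcal{P}_{n+1}$, is false as literally stated: it is a misprint in the statement. Indeed $\mathcal{P}_0 = \mathcal{P}$ by clause (1), while $\mathcal{P}_1$ consists of the proofs whose main fragment is cut-free; so any proof ending in a cut lies in $\mathcal{P}_0 \setminus \mathcal{P}_1$. Concretely, applying ($\mathsf{cut}$) with cut formula $q$ to the initial sequents $p \Rightarrow q, p$ and $p, q \Rightarrow p$ gives such a proof, so $\mathcal{P}_0 \not\subset \mathcal{P}_1$ and the outer base case of your induction cannot be established. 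Your own remark in the second half — that there is no clause for ($\mathsf{cut}$) among (2)--(5) — is precisely the reason: for arbitrary $\pi \in \mathcal{P}_0$ the inner induction on the main fragment has no case covering a last inference ($\mathsf{cut}$), so the scheme never gets started at $n=0$. The intended claim is the reverse, monotone-decreasing inclusion $\mathcal{P}_{n+1} \subset \mathcal{P}_n$, which is what the rest of the paper needs (a decreasing filtration whose intersection is the set of cut-free proofs — otherwise the intersection would simply be $\mathcal{P}_0 = \mathcal{P}$), and which matches the companion lemma asserting that $\sim_{n+1}$ is \emph{finer} than $\sim_n$.

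For the corrected direction, the bookkeeping you postponed as "the crux" does close, and you should have carried it out, since doing so is exactly what exposes the direction issue: the outer base $\mathcal{P}_1 \subset \mathcal{P}_0$ is trivial, and in the outer step with $n = m+1$, if $\pi \in \mathcal{P}_{n+1}$ ends in ($\Box$) via clause (5) with left premise $\pi' \in \mathcal{P}_{n+1}$ and right premise $\pi'' \in \mathcal{P}_n$, then the inner hypothesis gives $\pi' \in \mathcal{P}_n$, the outer hypothesis gives $\pi'' \in \mathcal{P}_m$, and clause (5) applied at index $m$ yields $\pi \in \mathcal{P}_{m+1} = \mathcal{P}_n$; the cases (2)--(4) are index-preserving as you say. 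Your treatment of the second assertion is essentially sound and unaffected by the direction problem: a cut-free proof is in every $\mathcal{P}_n$ by the same outer-on-$n$, inner-on-$\lvert\pi\rvert$ induction, and a proof containing a cut behind $k$ right premises of ($\Box$) is excluded from $\mathcal{P}_{k+1}$ by inversion of the clauses (the small induction along the path to the cut that you gesture at should be made explicit). The paper states this lemma without proof, so there is no argument to compare against; but as submitted, your first assertion is both misstated and unproven at its decisive case.
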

For $\pi, \tau \in \mathcal{P}$, we define $d(\pi,\tau) = 2^{- \sup \{n \in \mathbb{N} \:\mid\: \pi \sim_n \tau\}}$, where by convention $2^{-\infty} =0$. We see that an equivalence $\pi \sim_n \tau$ holds if and only if $d(\pi , \tau) \leqslant 2^{-n}$. 


\begin{prop}\label{ComplP}
$(\mathcal{P},d)$ is a complete metric space.
\end{prop}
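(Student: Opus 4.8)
The plan is to show that $d$ is in fact an ultrametric and then that every Cauchy sequence converges, with the limit built by gluing together stabilizing finite approximations.

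First I would check the metric axioms, actually in their stronger ultrametric form. Since $d(\pi,\tau) = 2^{-\sup\{n \mid \pi \sim_n \tau\}}$ and each $\sim_n$ is an equivalence relation by the preceding lemma, symmetry of $d$ is immediate and $d(\pi,\pi)=0$ follows from $\pi \sim_n \pi$ for all $n$. For the separation axiom, $d(\pi,\tau)=0$ means $\pi \sim_n \tau$ for every $n$, so $\pi=\tau$ because the intersection of all the $\sim_n$ is the equality relation. The triangle inequality I would prove in the strong form $d(\pi,\rho) \le \max\{d(\pi,\tau), d(\tau,\rho)\}$: if $m = \sup\{n \mid \pi \sim_n \tau\}$ and $m' = \sup\{n \mid \tau \sim_n \rho\}$, then for every $n \le \min\{m,m'\}$ transitivity of $\sim_n$ gives $\pi \sim_n \rho$, so $\sup\{n \mid \pi \sim_n \rho\} \ge \min\{m,m'\}$ and the bound follows. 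Here I use the recorded equivalence $\pi \sim_n \tau \Leftrightarrow d(\pi,\tau) \le 2^{-n}$.

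For completeness, let $(\pi_k)_k$ be a Cauchy sequence. For each $n$ I choose $N_n$, which I may take nondecreasing in $n$, such that $\pi_k \sim_n \pi_l$ whenever $k,l \ge N_n$, and set $\sigma_n := \pi_{N_n}$. Since $\sim_{n+1}$ refines $\sim_n$ and $N_{n+1} \ge N_n$, the approximants cohere: $\sigma_{n+1} \sim_n \sigma_n$ for all $n$. The key structural observation is that $\sim_n$ pins down exactly a finite initial region of an $\infty$-proof: $\sim_1$ fixes the main fragment, which is a finite tree, and passing from $\sim_n$ to $\sim_{n+1}$ only appends, above each right premise of $\Box$ on the current boundary, one further main fragment, so iterating $n$ times keeps the region finite. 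I would therefore define the limit $\pi$ by declaring its first $n$ layers (its $\sim_n$-region) to agree with those of $\sigma_n$; coherence makes these prescriptions mutually consistent, and their union is a single tree $\pi$. By construction $\pi \sim_n \sigma_n = \pi_{N_n}$, so for $k \ge N_n$ we get $\pi_k \sim_n \pi_{N_n} \sim_n \pi$, that is $d(\pi_k,\pi) \le 2^{-n}$, whence $\pi_k \to \pi$.

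The step I expect to be the main obstacle is verifying that $\pi$ really is an element of $\mathcal P$, i.e.\ a bona fide $\infty$-proof. Local correctness, that each inference is a legitimate rule instance and each leaf an initial sequent, is inherited node by node from the $\sigma_n$, since any node of $\pi$ together with its immediate successors already occurs in some $\sigma_n$. The delicate point is the global side condition that every infinite branch passes through a right premise of $\Box$ infinitely often. I would argue by contradiction: if some infinite branch crossed only finitely many, say $m$, right premises of $\Box$, then all of its nodes would lie in the $\sim_{m+1}$-region of $\pi$, which coincides with that of $\sigma_{m+1}$ and is a finite tree by the observation above; an infinite branch cannot be contained in a finite tree. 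Hence every infinite branch of $\pi$ meets right premises of $\Box$ infinitely often, $\pi$ is an $\infty$-proof, and $(\mathcal P, d)$ is complete.
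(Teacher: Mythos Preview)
The paper states this proposition without proof, so there is nothing to compare your argument against. Your approach is the standard one for spaces of infinite trees equipped with a depth-based ultrametric, and it is essentially correct.

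Two small points worth tightening. First, the relation $\sim_n$ is defined on $\mathcal{P}$, so writing ``$\pi \sim_n \sigma_n$ by construction'' before establishing $\pi \in \mathcal{P}$ is, strictly speaking, premature. The clean order is: build $\pi$ as a labeled tree (your layer-by-layer gluing does this), verify local correctness and the global branch condition so that $\pi \in \mathcal{P}$, and only then invoke $\sim_n$. You already have all the ingredients; just rearrange them. Second, the finiteness of each ``layer'' (main fragment) of $\sigma_n$ is not entirely by fiat: it follows from K\"onig's lemma, since the rules are finitely branching and every branch of an $\infty$-proof that never crosses a right premise of $\Box$ must be finite by the defining side condition. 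You use this implicitly in your contradiction argument for the global condition, and it would be worth making the appeal to K\"onig explicit.
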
 

A mapping $\mathcal{U} \colon \mathcal{P}^k_m \to \mathcal{P}_m$ is \emph{nonexpansive} if for any $n \in \mathbb{N}$
$$\pi_1 \sim_n \tau_1, \dotsc , \pi_k \sim_n \tau_k \Rightarrow \mathcal{U}(\pi_1, \dotsc, \pi_k) \sim_n \mathcal{U}(\tau_1, \dotsc, \tau_k)\:,$$
which is equivalent to the standard condition $$d(\mathcal{U}(\pi_1, \dotsc, \pi_k),\mathcal{U}(\tau_1, \dotsc, \tau_k)) \leqslant \max\{d(\pi_1,\tau_1) ,\dotsc, d(\pi_k,\tau_k) \}\:.$$
Trivially, any nonexpansive mapping is continuous.

A nonexpansive mapping $\mathcal{U} \colon \mathcal{P} \to \mathcal{P}$ is called \emph{adequate} if 
$ \mathcal{U}(\mathcal{P}_1)\subset \mathcal{P}_1$ and $\lvert \mathcal{U}(\pi)\rvert \leqslant \lvert \pi \rvert$ for any $\pi \in \mathcal{P}$.

Recall that an inference rule is called \emph{admissible} (in a given proof system) if, for any instance of the rule, the conclusion is provable whenever all premises are provable. 
In $\mathsf{Grz}_{\infty} +\mathsf{cut}$, we call a single-premise inference rule \emph{strongly admissible} if there is an adequate mapping $\mathcal{U}\colon\mathcal{P} \to \mathcal{P}$ that maps any $\infty$-proof of the premise of the rule to an $\infty$-proof of the conclusion.

\begin{lem}\label{strongweakening}
For any finite multisets of formulas $\Pi$ and $\Sigma$, the inference rule
\begin{gather*}
\AxiomC{$\Gamma\Rightarrow\Delta$}
\LeftLabel{$\mathsf{wk}_{\Pi, \Sigma}$}
\UnaryInfC{$\Pi,\Gamma\Rightarrow\Delta,\Sigma$}
\DisplayProof
\end{gather*}
is strongly admissible in $\mathsf{Grz}_{\infty} +\mathsf{cut}$. 
\end{lem}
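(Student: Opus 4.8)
The plan is to define the weakening mapping $\mathcal{U} = \mathcal{U}_{\Pi,\Sigma}$ directly by corecursion on the given $\infty$-proof, adding $\Pi$ to every antecedent and $\Sigma$ to every succedent \emph{except} inside the right premises of the rule $(\mathsf{\Box})$, which I leave untouched. Concretely, if $\pi$ is an initial sequent, then $\mathcal{U}(\pi)$ is the corresponding initial sequent with $\Pi$ and $\Sigma$ adjoined; if $\pi$ ends with one of $(\mathsf{\to_L})$, $(\mathsf{\to_R})$, $(\mathsf{refl})$ or $(\mathsf{cut})$ applied to subproofs $\pi'$ (and possibly $\pi''$), then $\mathcal{U}(\pi)$ ends with the same rule applied to $\mathcal{U}(\pi')$ (and $\mathcal{U}(\pi'')$) with the contexts enlarged by $\Pi, \Sigma$; and if $\pi$ ends with $(\mathsf{\Box})$ from a left premise proof $\pi'$ of $\Gamma, \Box\Pi_0 \Rightarrow A, \Delta$ and a right premise proof $\pi''$ of $\Box\Pi_0 \Rightarrow A$, then $\mathcal{U}(\pi)$ ends with $(\mathsf{\Box})$ from $\mathcal{U}(\pi')$ and $\pi''$, now with conclusion $\Pi, \Gamma, \Box\Pi_0 \Rightarrow \Box A, \Delta, \Sigma$. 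The whole point of leaving $\pi''$ unchanged is that a right premise of $(\mathsf{\Box})$ must contain only boxed formulas on the left and a single formula on the right, so it cannot tolerate arbitrary weakening; the extra formulas are instead absorbed into the side context $\Gamma, \Delta$, which is discarded when passing to the right premise.

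This corecursion is guarded, since each clause already produces the root of $\mathcal{U}(\pi)$, so $\mathcal{U}$ is a well-defined total map on $\mathcal{P}$. I then check that $\mathcal{U}(\pi)$ is a genuine $\infty$-proof. Every clause turns a correct rule instance into a correct rule instance of the same kind, so all local correctness conditions hold; in particular the weakened conclusions of $(\mathsf{\Box})$ remain legitimate instances because the contexts $\Gamma$ and $\Delta$ there range over arbitrary multisets. Moreover $\mathcal{U}$ leaves the underlying tree and the rule attached to each node unchanged, so $\mathcal{U}(\pi)$ and $\pi$ have isomorphic proof trees. Consequently every infinite branch of $\mathcal{U}(\pi)$ corresponds to an infinite branch of $\pi$ and passes through right premises of $(\mathsf{\Box})$ infinitely often, so the global progress condition is preserved.

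It remains to verify that $\mathcal{U}$ is adequate and nonexpansive. Since the tree and the rules are preserved, $\lvert \mathcal{U}(\pi) \rvert = \lvert \pi \rvert$, which gives $\lvert \mathcal{U}(\pi)\rvert \leqslant \lvert\pi\rvert$, and a cut occurs in the main fragment of $\mathcal{U}(\pi)$ if and only if one occurs in the main fragment of $\pi$, whence $\mathcal{U}(\mathcal{P}_1) \subset \mathcal{P}_1$. For nonexpansiveness I prove $\pi \sim_n \tau \Rightarrow \mathcal{U}(\pi) \sim_n \mathcal{U}(\tau)$. The case $n = 0$ is immediate, and for $n \geqslant 1$ the relation $\sim_n$ forces $\pi$ and $\tau$ to share a main fragment, hence a common local height, so I argue by induction on $\lvert\pi\rvert = \lvert\tau\rvert$ with a case analysis on the last rule. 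For $(\mathsf{\to_L}), (\mathsf{\to_R}), (\mathsf{refl}), (\mathsf{cut})$ the premises have strictly smaller local height and are related by $\sim_n$, so the induction hypothesis together with the matching clause in the definition of $\sim_n$ applies. For $(\mathsf{\Box})$, writing $n = m+1$, the left premises satisfy $\pi' \sim_{m+1} \tau'$ and have smaller local height, so the induction hypothesis yields $\mathcal{U}(\pi') \sim_{m+1} \mathcal{U}(\tau')$, while the right premises are untouched by $\mathcal{U}$, so $\pi'' \sim_m \tau''$ supplies the relation for them directly; the fifth clause in the definition of $\sim_{m+1}$ then closes the case.

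The conceptual heart of the argument is the handling of $(\mathsf{\Box})$: one must notice that weakening cannot be propagated into its right premise and that, fortunately, it need not be, because those formulas belong to the side context. The main technical point I expect to need care is the verification of nonexpansiveness, namely that the tree isomorphism induced by $\mathcal{U}$ is compatible with the stratified equivalences $\sim_n$; once the induction is organised by local height with the right premises of $(\mathsf{\Box})$ treated by the identity, this becomes routine.
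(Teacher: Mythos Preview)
Your argument is correct and is precisely the standard construction the paper alludes to when it writes ``these lemmata can be obtained in a standard way, so we omit the proofs.'' In particular, your key observation---that the right premise of $(\mathsf{\Box})$ must be left untouched while the side context absorbs $\Pi,\Sigma$---and your organisation of the nonexpansiveness check as an induction on local height exactly mirror how the paper handles the analogous mappings in the Appendix (e.g.\ the proof of Lemma~\ref{smallcut}); note also that since you do not recurse into right $(\mathsf{\Box})$-premises, your ``corecursion'' is in fact just an ordinary induction on $\lvert\pi\rvert$, which is the idiom the paper uses throughout.
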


\begin{lem}\label{inversion}
For any formulas $A$ and $B$, the rules
\begin{gather*}
\AxiomC{$\Gamma , A \rightarrow B \Rightarrow  \Delta$}
\LeftLabel{$\mathsf{li}_{A \to B}$}
\UnaryInfC{$\Gamma ,B \Rightarrow  \Delta$}
\DisplayProof\qquad
\AxiomC{$\Gamma , A \rightarrow B \Rightarrow  \Delta$}
\LeftLabel{$\mathsf{ri}_{A \to B}$}
\UnaryInfC{$\Gamma  \Rightarrow  A, \Delta$}
\DisplayProof\\\\
\AxiomC{$\Gamma  \Rightarrow   A \rightarrow B, \Delta$}
\LeftLabel{$\mathsf{i}_{A \to B}$}
\UnaryInfC{$\Gamma ,A \Rightarrow B, \Delta$}
\DisplayProof\qquad
\AxiomC{$\Gamma  \Rightarrow   \bot, \Delta$}
\LeftLabel{$\mathsf{i}_{\bot}$}
\UnaryInfC{$\Gamma  \Rightarrow \Delta$}
\DisplayProof\qquad 
\AxiomC{$\Gamma \Rightarrow \Box A, \Delta$}
\LeftLabel{$\mathsf{li}_{\:\Box A}$}
\UnaryInfC{$\Gamma \Rightarrow A, \Delta$}
\DisplayProof
\end{gather*}
are strongly admissible in $\mathsf{Grz}_{\infty} +\mathsf{cut}$.
\end{lem}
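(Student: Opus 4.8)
The plan is to handle all five rules uniformly by constructing, for each of them, an adequate mapping $\mathcal{U}\colon\mathcal{P}\to\mathcal{P}$ defined by recursion on the last inference of the input $\infty$-proof $\pi$. In every case the guiding idea is to fix the distinguished occurrence of the principal formula in the endsequent (the $A\to B$, the $\bot$, or the $\Box A$) and to rewrite $\pi$ by carrying out the corresponding replacement everywhere this occurrence is tracked through the proof: deleting $A\to B$ and inserting $B$ on the left for $\mathsf{li}_{A\to B}$, deleting $A\to B$ and inserting $A$ on the right for $\mathsf{ri}_{A\to B}$, moving $A$ to the left and $B$ to the right for $\mathsf{i}_{A\to B}$, simply erasing $\bot$ on the right for $\mathsf{i}_\bot$, and replacing $\Box A$ by $A$ on the right for $\mathsf{li}_{\:\Box A}$. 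On inputs whose endsequent does not have the required shape, $\mathcal{U}$ may be taken to be the identity.

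The recursion splits into two kinds of cases. If the last rule of $\pi$ introduces the tracked occurrence itself (the rule $\mathsf{\to_L}$ acting on $A\to B$ for $\mathsf{li}$ and $\mathsf{ri}$, the rule $\mathsf{\to_R}$ on $A\to B$ for $\mathsf{i}_{A\to B}$, or the rule $\mathsf{\Box}$ with principal formula $\Box A$ for $\mathsf{li}_{\:\Box A}$), then the conclusion we are after already occurs as one of the premises, and $\mathcal{U}$ returns the corresponding immediate subproof: the left premise for $\mathsf{li}_{A\to B}$ and $\mathsf{li}_{\:\Box A}$, the right premise for $\mathsf{ri}_{A\to B}$, and the unique premise for $\mathsf{i}_{A\to B}$. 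For $\mathsf{i}_\bot$ there is no such case, since $\bot$ has no right-introduction rule. Otherwise the tracked occurrence is a side formula of the last inference; then $\mathcal{U}$ recurses into those premises in which the occurrence persists and reapplies the same rule, performing the appropriate replacement in the side contexts. For initial sequents one checks directly that the replacement again yields an initial sequent.

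The key observation that makes the construction well behaved is that the tracked occurrence never migrates into the right premise $\Box\Pi\Rightarrow A''$ of an instance of $\mathsf{\Box}$: it is always a non-boxed antecedent formula or a succedent formula, whereas the right premise carries only $\Box\Pi$ on the left and a single formula on the right. Hence at every $\mathsf{\Box}$ node $\mathcal{U}$ either stops (when $\Box A$ is principal) or recurses only into the left premise, leaving the right-premise subproof untouched, and one verifies that reassembling with the modified left premise and the unchanged right premise is again a legitimate instance of $\mathsf{\Box}$ (this uses that $\mathsf{\Box}$ allows arbitrary side contexts $\Gamma,\Delta$). Consequently the recursion never crosses a right premise of $\mathsf{\Box}$ and therefore stays inside the finite main fragment of $\pi$; this guarantees that $\mathcal{U}(\pi)$ is totally defined, that it modifies only the main fragment while copying every subproof hanging at a right premise of $\mathsf{\Box}$ verbatim, and hence that $\mathcal{U}(\pi)$ is again a genuine $\infty$-proof (its infinite branches are tails of infinite branches of $\pi$ and so still pass through right premises of $\mathsf{\Box}$ infinitely often). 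Since the transformation only deletes or reapplies existing rules and, in the stop cases, discards a subproof, no new rules and in particular no cuts are created, giving $\lvert\mathcal{U}(\pi)\rvert\leqslant\lvert\pi\rvert$ and $\mathcal{U}(\mathcal{P}_1)\subset\mathcal{P}_1$.

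It remains to check nonexpansiveness, namely $\pi\sim_n\tau\Rightarrow\mathcal{U}(\pi)\sim_n\mathcal{U}(\tau)$ for all $n$; this I would prove by induction on $n$ with an inner induction on the local height, following the case analysis above. The delicate point, and the main obstacle, is exactly the $\mathsf{\Box}$ rule: there $\pi\sim_{n+1}\tau$ forces the left premises to be related by $\sim_{n+1}$ and the right premises by $\sim_n$, and one must check that recursing into the left premises preserves $\sim_{n+1}$ (inner induction) while the identically copied right-premise subproofs retain $\sim_n$, so that reapplying $\mathsf{\Box}$ restores $\sim_{n+1}$. The remaining rules are routine, since the replacement is purely local to the side contexts. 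I expect the bulk of the work to lie in stating the replacement on sequents precisely enough that the $\mathsf{\Box}$ and initial-sequent cases, together with the principal $\Box A$ case of $\mathsf{li}_{\:\Box A}$, go through cleanly.
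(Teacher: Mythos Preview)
Your proposal is correct and is precisely the standard argument the paper has in mind; the paper itself omits the proof, saying only that ``these lemmata can be obtained in a standard way.'' Your key observation---that the tracked occurrence never enters the right premise of~$\mathsf{\Box}$, so the recursion is confined to the main fragment and the right-premise subproofs are copied verbatim---is exactly what makes the construction adequate (nonexpansive, height-nonincreasing, and $\mathcal{P}_1$-preserving).
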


\begin{lem}\label{weakcontraction}
For any atomic proposition $p$, the rules
\begin{gather*}
\AxiomC{$\Gamma , p,p \Rightarrow  \Delta$}
\LeftLabel{$\mathsf{acl}_{p}$}
\UnaryInfC{$\Gamma ,p \Rightarrow  \Delta$}
\DisplayProof\qquad
\AxiomC{$\Gamma \Rightarrow p,p, \Delta$}
\LeftLabel{$\mathsf{acr}_{p}$}
\UnaryInfC{$\Gamma \Rightarrow p, \Delta$}
\DisplayProof
\end{gather*}
are strongly admissible in $\mathsf{Grz}_{\infty} +\mathsf{cut}$.
\end{lem}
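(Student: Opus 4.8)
The plan is to establish, for each of the two rules, an adequate mapping witnessing strong admissibility, built by recursion on the local height. Since $\mathsf{acl}_p$ and $\mathsf{acr}_p$ are symmetric (one contracts two copies of $p$ in the antecedent, the other two copies in the succedent), I would treat $\mathsf{acl}_p$ in detail and remark that $\mathsf{acr}_p$ is analogous. The guiding observation is that $p$ is atomic: the two contracted copies are never the principal formula of any inference, so throughout the proof they are merely carried in the side context. Accordingly, the mapping $\mathcal{U}$ sends an $\infty$-proof of $\Gamma, p, p \Rightarrow \Delta$ to an $\infty$-proof of $\Gamma, p \Rightarrow \Delta$ by reapplying, at each node, the very same rule to the recursively contracted premises.

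The key case is the rule $\mathsf{\Box}$, with premises $\Gamma, \Box\Pi \Rightarrow A, \Delta$ and $\Box\Pi \Rightarrow A$. The two contracted copies sit in $\Gamma$, which is discarded in passing to the right premise; hence they occur in the left premise but not in the right one. I would therefore define $\mathcal{U}$ on a $\mathsf{\Box}$-inference by recursively contracting the left premise, leaving the right premise untouched, and reapplying $\mathsf{\Box}$. This is exactly what makes the recursion terminate without any fixed-point argument: its active part follows the main fragment, which is finite, and the left premise of $\mathsf{\Box}$ has strictly smaller local height than its conclusion, so $\mathcal{U}$ is well-defined by recursion on $\lvert\pi\rvert$. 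For the remaining rules ($\mathsf{\to_L}$, $\mathsf{cut}$, $\mathsf{\to_R}$, $\mathsf{refl}$) the copies occur in every premise and one simply recurses into all of them. At the leaves, an initial sequent $\Gamma, p, p \Rightarrow \Delta$ is again initial after deleting one copy of $p$: this is immediate in the $\bot$-case, and in the atomic-axiom case the matching atom survives whether or not it is one of the contracted copies.

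It then remains to check that $\mathcal{U}$ is adequate. The output is a genuine $\infty$-proof because the tree shape is preserved and the right premises of $\mathsf{\Box}$ are copied verbatim, so every infinite branch still meets a right premise of $\mathsf{\Box}$ infinitely often; moreover each reapplied rule is a correct instance over the contracted context. Since no rule is added and, in particular, no $\mathsf{cut}$ is introduced, we obtain $\lvert\mathcal{U}(\pi)\rvert = \lvert\pi\rvert$ and $\mathcal{U}(\mathcal{P}_1)\subset\mathcal{P}_1$. The one point requiring care is nonexpansiveness, which I would prove, for each fixed $n$, by induction on the local height, exploiting that $\pi \sim_{n+1} \tau$ forces $\pi$ and $\tau$ to share a main fragment and hence to have equal local height, so the two recursions proceed in lockstep. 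The delicate step is the $\mathsf{\Box}$-case, where clause (5) in the definition of $\sim_{n+1}$ demands $\sim_{n+1}$ on the left premises and $\sim_n$ on the right premises: the left part follows from the induction hypothesis, while the right part holds directly because the right premises are copied unchanged, so their given $\sim_n$-equivalence transfers verbatim. This interplay between the index shift in $\sim_{n+1}$ and the verbatim copying of $\mathsf{\Box}$-right premises is the main thing to get right; everything else is routine.
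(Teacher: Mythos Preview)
Your proposal is correct and is precisely the standard argument the paper has in mind; the paper in fact omits the proof entirely, stating only that ``these lemmata can be obtained in a standard way.'' One small point you leave implicit is that $\mathcal{U}$ must be defined on all of $\mathcal{P}$, not only on proofs whose root has the form $\Gamma,p,p\Rightarrow\Delta$; setting $\mathcal{U}(\pi)=\pi$ otherwise handles this and preserves nonexpansiveness since $\pi\sim_n\tau$ for $n\geq 1$ forces equal root sequents.
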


These lemmata can be obtained in a standard way, so we omit the proofs.

\section{Cut elimination}
\label{s4}

In this section we construct a continuous cut elimination mapping from $\mathcal{P}$ to $\mathcal{P}$, which eliminates all applications of the cut rule from any $\infty$-proof in $\mathsf{Grz}_{\infty} +\mathsf{cut}$. In what follows, we use nonexpansive mappings $\mathcal \mathit{wk}_{\Pi, \Sigma}$, $\mathit{li}_{A\to B}$, $\mathit{ri}_{A\to B}$, $\mathit{i}_{A\to B}$, $\mathit{i}_\bot$, $\mathit{li}_{\Box A}$, $\mathit{acl}_{p}$, $\mathit{acr}_{p}$ from Lemma \ref{strongweakening}, Lemma \ref{inversion} and Lemma \ref{weakcontraction}.

For a modal formula $A$, a nonexpansive mapping $\mathcal{R}$ from $\mathcal P_1 \times \mathcal P_1$ to $\mathcal P_1$ is called
\emph{$A$-reducing} if $\mathcal R(\pi^\prime,\pi^{\prime\prime})$ is an $\infty$-proof of $\Gamma\Rightarrow \Delta$ whenever $\pi^\prime$ is an $\infty$-proof of $\Gamma\Rightarrow \Delta, A$ and $\pi^{\prime\prime}$ is an $\infty$-proof of $A, \Gamma\Rightarrow \Delta$.

\begin{lem}\label{smallcut}
For any atomic proposition $p$ there is a $p$-reducing mapping $\mathcal{R}_p$.
\end{lem}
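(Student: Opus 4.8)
The plan is to define $\mathcal{R}_p(\pi',\pi'')$ by recursion on the local height $\lvert \pi'' \rvert$, performing a case analysis on the last rule of $\pi''$. The crucial simplification is that $p$ is atomic, hence $p$ can be principal only in an initial sequent; in every other rule the occurrence of the cut formula on the left of $\pi''$ is a side formula that survives into all premises. Thus the recursion descends through $\pi''$, at each step adapting $\pi'$ to the premises by the strongly admissible maps of Lemmas \ref{strongweakening}, \ref{inversion} and \ref{weakcontraction}, and then reapplying the same rule.

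Concretely, if $\pi''$ is an initial sequent, then either $\Gamma \Rightarrow \Delta$ is already initial, in which case I return that axiom, or else the sequent is initial only because of the cut occurrence, i.e. $p \in \Delta$; in the latter case $\pi'$ proves $\Gamma \Rightarrow \Delta, p$ with $p$ already present in $\Delta$, so I set $\mathcal{R}_p(\pi',\pi'') := \mathit{acr}_p(\pi')$. If $\pi''$ ends with $\mathsf{\to_L}$, $\mathsf{\to_R}$ or $\mathsf{refl}$, I transform $\pi'$ into proofs matching the premises using $\mathit{li}_{A\to B}$ and $\mathit{ri}_{A\to B}$ (for $\mathsf{\to_L}$), $\mathit{i}_{A\to B}$ (for $\mathsf{\to_R}$), or $\mathit{wk}_{C,\varnothing}$ inserting the reflexively unboxed formula $C$ on the left (for $\mathsf{refl}$), recurse with $\mathcal{R}_p$ on the corresponding premises of $\pi''$, and reapply the rule. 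The decisive case is $\mathsf{\Box}$: the conclusion is $p,\Gamma_0,\Box\Pi \Rightarrow \Box C,\Delta_0$ with left premise proving $p,\Gamma_0,\Box\Pi \Rightarrow C,\Delta_0$ and right premise proving $\Box\Pi \Rightarrow C$. The right premise contains no occurrence of the cut formula, so it is left untouched; I apply $\mathit{li}_{\Box C}$ to $\pi'$, recurse on the left premise only, and reapply $\mathsf{\Box}$ with the original right premise.

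It then remains to verify the three requirements on $\mathcal{R}_p$. Since only the rules $\mathsf{\to_L},\mathsf{\to_R},\mathsf{refl},\mathsf{\Box}$ together with contractions and axioms at the leaves are used, no cut is ever introduced into the main fragment, so $\mathcal{R}_p$ does map $\mathcal{P}_1 \times \mathcal{P}_1$ into $\mathcal{P}_1$; and every recursive call is made on a premise of $\pi''$ of strictly smaller local height, so the definition is well founded. For nonexpansiveness I would argue by induction on $\lvert \pi'' \rvert$, uniformly in $n$: when $\pi' \sim_n \sigma'$ and $\pi'' \sim_n \sigma''$ with $n \geqslant 1$, the proofs $\pi''$ and $\sigma''$ share their main fragment and hence end with the same rule, so the same clause of the definition applies to both; the auxiliary maps are all nonexpansive, and clauses (3)--(5) of the definition of $\sim_n$ let one propagate the relation to the recursive arguments and back through the reapplied rule. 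I expect the $\mathsf{\Box}$ case to be the main obstacle, since it is where the metric structure interacts with the construction: one must check that keeping the right premise fixed is compatible with clause (5), namely that the relation $\pi''_{\mathrm{right}} \sim_{n-1} \sigma''_{\mathrm{right}}$ suffices for the right components while the left components stay at level $n$, so that the depth budget of the metric is spent exactly where $\mathsf{\Box}$ consumes it.
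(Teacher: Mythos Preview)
Your argument is correct and is the mirror image of the paper's. The paper defines $\mathcal{R}_p(\pi',\pi'')$ by induction on $\lvert\pi'\rvert$ with a case analysis on the last rule of $\pi'$: in the base case $\Gamma\Rightarrow\Delta,p$ is initial, and if $\Gamma\Rightarrow\Delta$ is not already initial then $p\in\Gamma$, so one applies $\mathit{acl}_p$ to $\pi''$; in the inductive cases one adapts $\pi''$ to the premises of $\pi'$ via the inversion and weakening maps and recurses, keeping the right $\mathsf{\Box}$-premise of $\pi'$ untouched. You instead induct on $\lvert\pi''\rvert$, use $\mathit{acr}_p$ on $\pi'$ in the base case, adapt $\pi'$ to the premises of $\pi''$, and keep the right $\mathsf{\Box}$-premise of $\pi''$ untouched. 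The two constructions are perfectly dual; neither buys anything over the other, and the nonexpansiveness verification (including your correct handling of the $\mathsf{\Box}$ clause, where the untouched right premise only needs $\sim_{n-1}$) goes through identically. One small omission: $\mathcal{R}_p$ must be a total map $\mathcal{P}_1\times\mathcal{P}_1\to\mathcal{P}_1$, so you should say what happens when $(\pi',\pi'')$ is not a cut pair on $p$; the paper simply returns $\pi'$ in that case.
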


\begin{lem}\label{boxcut}
Given a $B$-reducing mapping $\mathcal{R}_B$, there is a $\Box B$-reducing mapping $\mathcal{R}_{\Box B}$.
\end{lem}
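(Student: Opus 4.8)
The plan is to define $\mathcal{R}_{\Box B}$ by recursion on the main fragments of the two inputs, pushing the $\Box B$-cut towards the leaves, and to justify the steps that leave the main fragment by appealing to completeness of $(\mathcal{P},d)$ (Proposition \ref{ComplP}). Throughout I would use the nonexpansive auxiliary maps $\mathit{wk}_{\Pi,\Sigma}$, $\mathit{li}_{A\to B}$, $\mathit{ri}_{A\to B}$, $\mathit{i}_{A\to B}$, $\mathit{li}_{\Box A}$ of Lemmas \ref{strongweakening} and \ref{inversion}, together with the given $B$-reducing map $\mathcal{R}_B$; all of these are nonexpansive and, apart from $\mathcal{R}_B$, adequate. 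Write $\pi'\in\mathcal{P}_1$ for the proof of $\Gamma\Rightarrow\Delta,\Box B$ and $\pi''\in\mathcal{P}_1$ for the proof of $\Box B,\Gamma\Rightarrow\Delta$.

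First the routine cases. If $\pi''$ or $\pi'$ ends with an axiom in which $\Box B$ is a mere side formula, then (axioms being atomic) $\Gamma\Rightarrow\Delta$ is itself an axiom and I take that as the value. If the last rule is propositional, or $\mathsf{refl}$ on a formula other than $\Box B$, or the left premise of a $\mathsf{\Box}$, I permute the cut into the premise(s): I adjust the passive proof by the matching inversion map (e.g.\ $\mathit{li}_{\Box C}$ to turn $\Box C$ on the right into $C$, or $\mathit{li}_{A\to B},\mathit{ri}_{A\to B},\mathit{i}_{A\to B}$ for a propositional principal formula), apply $\mathcal{R}_{\Box B}$ to the smaller premises, and reapply the rule. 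Each such call strictly decreases $|\pi'|+|\pi''|$, so these cases are governed by an ordinary induction on the local heights. The principal case on the left is when $\pi''$ ends with $\mathsf{refl}$ on $\Box B$, with premise $\rho:B,\Box B,\Gamma\Rightarrow\Delta$; here I set
\[ \mathcal{R}_{\Box B}(\pi',\pi'')\;:=\;\mathcal{R}_B\bigl(\mathit{li}_{\Box B}(\pi'),\ \mathcal{R}_{\Box B}(\mathit{wk}_{B,\varnothing}(\pi'),\rho)\bigr), \]
splitting the $\Box B$-cut into a $B$-cut (handled by $\mathcal{R}_B$) and a $\Box B$-cut against $\rho$, whose local height is strictly smaller.

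The essential case is when $\pi''$ ends with $\mathsf{\Box}$ introducing some $\Box C$, so that $\Delta=\Box C,\Delta'$ and the premises are $\lambda:\Box B,\Gamma\Rightarrow C,\Delta'$ and $\mu:\Box B,\Box\Pi\Rightarrow C$, where $\Box\Pi$ collects the boxed formulas of $\Gamma$. I reconstruct the conclusion by the same $\mathsf{\Box}$ rule with boxed context $\Box\Pi$: its left premise $\Gamma\Rightarrow C,\Delta'$ is produced by the decreasing call $\mathcal{R}_{\Box B}(\mathit{li}_{\Box C}(\pi'),\lambda)$, while its right premise must be $\Box\Pi\Rightarrow C$, obtained by cutting $\Box B$ out of $\mu$. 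For that cut I need a companion proof of $\Box\Pi\Rightarrow\Box B,C$. I obtain it by an inner recursion on $\pi'$: permuting the cut down the $\pi'$-side (each such step again decreasing $|\pi'|$ and, since $\Box B$ sits in the succedent, never forcing a descent) until either $\Box B$ becomes principal, i.e.\ $\pi'$ ends with $\mathsf{\Box}$ introducing $\Box B$ with modal premise $\sigma_2:\Box\Pi_0\Rightarrow B$, or $\pi'$ reaches an axiom; in the former case I form $\nu:\Box\Pi\Rightarrow\Box B,C$ by one application of $\mathsf{\Box}$ to $\mathit{wk}_{\dots}(\sigma_2)$ and $\sigma_2$, and set the right premise to $\mathcal{R}_{\Box B}(\nu,\mu)$.

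The main obstacle is exactly this last call: since $\mathcal{R}_{\Box B}(\nu,\mu)$ is a right premise of a $\mathsf{\Box}$ inference it lies \emph{outside} the main fragment of the output, and $\nu,\mu$ need not have smaller local height than $\pi',\pi''$, so no induction on $|\pi'|+|\pi''|$ can license it. I would resolve this through the metric: every descent first emits a $\mathsf{\Box}$ node and only then recurses, so I may define the value of $\mathcal{R}_{\Box B}$ up to $\sim_n$ by recursion on $n$, each descent consuming one unit of $n$ in accordance with clause~5 of the definition of $\sim_n$; the emitted $\mathsf{\Box}$ nodes guarantee that every infinite branch of the output passes through a right premise of $\mathsf{\Box}$ infinitely often, so the limit, which exists by completeness of $(\mathcal{P},d)$, is a genuine $\infty$-proof. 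The same recursion on $n$ yields nonexpansiveness: assuming $\pi_1'\sim_n\pi_2'$ and $\pi_1''\sim_n\pi_2''$, the routine and $\mathsf{refl}$ clauses give $\sim_n$ of the outputs from nonexpansiveness of $\mathcal{R}_B$ and the auxiliary maps plus the hypothesis at the same $n$, while the modal clause needs only the hypothesis at $n-1$ for the guarded right-premise call. Finally, the output lies in $\mathcal{P}_1$ because in the main fragment every cut is consumed either by $\mathcal{R}_B$ or by a height-decreasing call to $\mathcal{R}_{\Box B}$ (whose outputs are in $\mathcal{P}_1$ by induction), the only residual cuts being pushed strictly below $\mathsf{\Box}$-right-premises; adequacy of the auxiliary maps keeps all local heights under control.
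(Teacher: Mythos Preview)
Your routine permutations and your treatment of the $\mathsf{refl}$-on-$\Box B$ case match the paper's argument (the paper orders the case split differently, first reducing $\pi'$ until $\Box B$ is principal and only then inspecting $\pi''$, so it uses the actual left premise $\pi'_0$ where you use $\mathit{li}_{\Box B}(\pi')$; this is cosmetic). The real divergence is in your ``essential case,'' and there you work much harder than necessary and introduce a genuine gap.

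The gap is this: your guarded recursive call $\mathcal{R}_{\Box B}(\nu,\mu)$ feeds in proofs that need not lie in $\mathcal{P}_1$. Both $\sigma_2$ and $\mu$ are \emph{right} premises of $\Box$-inferences in $\pi'$ and $\pi''$ respectively; they sit outside the main fragments of $\pi',\pi''$, so the hypothesis $\pi',\pi''\in\mathcal{P}_1$ says nothing about cuts occurring in their own main fragments. Your $\nu$ places a weakening of $\sigma_2$ as its \emph{left} $\Box$-premise, hence inside the main fragment of $\nu$, so in general $\nu\notin\mathcal{P}_1$ and likewise $\mu\notin\mathcal{P}_1$. Your fixed-point operator is therefore not well-defined on the domain $\mathcal{P}_1\times\mathcal{P}_1$, and extending it to $\mathcal{P}\times\mathcal{P}$ would force you to add a $\mathsf{cut}$ case to the local-height recursion, for which you would need reducing maps you have not yet constructed.

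The paper's insight is that none of this machinery is needed here. An $A$-reducing map is only required to land in $\mathcal{P}_1$, not in the cut-free proofs. In Case~B the paper simply \emph{leaves} an explicit cut in the right premise: from weakenings of $\pi'_1$ it forms $\Box\Pi\cup\Box\Pi'\Rightarrow\Box B,C$ by a single $\Box$-inference, weakens $\pi''_1$ to $\Box\Pi\cup\Box\Pi',\Box B\Rightarrow C$, and applies $\mathsf{cut}$ to obtain the right premise $\Box\Pi\cup\Box\Pi'\Rightarrow C$ of the outer $\Box$. That cut sits above a right $\Box$-premise, hence outside the main fragment of the output, so the output is still in $\mathcal{P}_1$. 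The entire construction is then a plain induction on $\lvert\pi'\rvert+\lvert\pi''\rvert$; no metric fixed point is invoked in this lemma (that device is reserved for the global operator $\mathcal{F}$ in Lemma~\ref{moving the cut} onwards).
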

The proof of these two Lemmas can be found in the Appendix.

\begin{lem}
For any formula $A$, there is an $A$-reducing mapping $\mathcal{R}_A$.
\end{lem}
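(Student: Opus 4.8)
The plan is to prove the statement by induction on the structure of the formula $A$, using the two preceding lemmas as base cases and one inductive case, and handling the remaining connectives directly. The claim asserts the existence of an $A$-reducing mapping $\mathcal{R}_A$ for every formula $A$, so the natural strategy is to recurse on the way $A$ is built up from the grammar $A ::= \bot \mid p \mid (A_1 \to A_2) \mid \Box A_1$.

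First I would dispatch the easy cases. For $A = p$ atomic, Lemma~\ref{smallcut} supplies $\mathcal{R}_p$ directly. For $A = \Box B$, assuming inductively that a $B$-reducing mapping $\mathcal{R}_B$ exists, Lemma~\ref{boxcut} produces the desired $\mathcal{R}_{\Box B}$. The case $A = \bot$ is handled without recursion: given $\pi'$ an $\infty$-proof of $\Gamma \Rightarrow \Delta, \bot$ and $\pi''$ an $\infty$-proof of $\bot, \Gamma \Rightarrow \Delta$, I would set $\mathcal{R}_\bot(\pi',\pi'') := \mathit{i}_\bot(\pi')$, since the left premise is never needed — the inversion mapping $\mathit{i}_\bot$ from Lemma~\ref{inversion} already turns a proof of $\Gamma \Rightarrow \bot, \Delta$ into one of $\Gamma \Rightarrow \Delta$. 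Because $\mathit{i}_\bot$ is nonexpansive and projection onto the first coordinate is nonexpansive, the composite is nonexpansive, and it maps $\mathcal P_1 \times \mathcal P_1$ into $\mathcal P_1$ by the adequacy properties.

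The remaining case $A = B \to C$ is the substantive one. Here I would use the inversion mappings to reduce an implication-cut to cuts on the strictly simpler subformulas $B$ and $C$, for which $\mathcal{R}_B$ and $\mathcal{R}_C$ are available by the induction hypothesis. Concretely, from $\pi'$ proving $\Gamma \Rightarrow \Delta, B \to C$ I can extract via $\mathit{i}_{B\to C}$ a proof of $\Gamma, B \Rightarrow C, \Delta$; from $\pi''$ proving $B \to C, \Gamma \Rightarrow \Delta$ I can extract via $\mathit{li}_{B\to C}$ and $\mathit{ri}_{B\to C}$ proofs of $\Gamma, C \Rightarrow \Delta$ and $\Gamma \Rightarrow B, \Delta$. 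I would then compose these with weakenings $\mathit{wk}_{\Pi,\Sigma}$ to align contexts and apply $\mathcal{R}_B$ and $\mathcal{R}_C$ in the appropriate order to splice out first the cut on $B$ and then the cut on $C$ (or vice versa), finishing with contraction mappings if duplicate formulas arise. The definition of $\mathcal{R}_{B\to C}$ is thus an explicit composition of finitely many nonexpansive mappings, so it is itself nonexpansive, and it lands in $\mathcal P_1$.

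The main obstacle I anticipate is purely bookkeeping rather than conceptual: getting the contexts to match so that the reducing mappings $\mathcal{R}_B$ and $\mathcal{R}_C$ apply to genuinely $B$- and $C$-shaped cuts, while ensuring every intermediate mapping stays within $\mathcal P_1$ and remains nonexpansive so that the final composite does as well. Since composition of nonexpansive maps is nonexpansive, and all the auxiliary maps from Lemmas~\ref{strongweakening}, \ref{inversion}, and \ref{weakcontraction} preserve $\mathcal P_1$, nonexpansiveness is automatic once the composition is written down correctly; the only real care needed is in tracking the multiset of side formulas through the inversions and weakenings.
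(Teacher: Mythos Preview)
Your proposal is correct and follows essentially the same approach as the paper: induction on the structure of $A$, invoking Lemma~\ref{smallcut} for atoms, Lemma~\ref{boxcut} for $\Box B$, setting $\mathcal{R}_\bot(\pi',\pi'') := \mathit{i}_\bot(\pi')$, and handling $B\to C$ by composing the inversion maps $\mathit{i}_{B\to C}$, $\mathit{li}_{B\to C}$, $\mathit{ri}_{B\to C}$ with a weakening and the inductively given $\mathcal{R}_B$, $\mathcal{R}_C$. The paper's explicit formula in the implication case is $\mathcal{R}_{B\to C}(\pi',\pi'') = \mathcal{R}_C(\mathcal{R}_B(\mathit{wk}_{\emptyset,C}(\mathit{ri}_{B\to C}(\pi'')),\mathit{i}_{B\to C}(\pi')),\mathit{li}_{B\to C}(\pi''))$, which needs no contraction; and a small slip in your $\bot$ case---it is $\pi''$, not the left premise $\pi'$, that is discarded.
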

\begin{proof}
We define $\mathcal{R}_{A}$ by induction on the structure of the formula $A $.

Case 1: $A$ has the form $p$. In this case, $\mathcal{R}_{p} $ is defined in Lemma \ref{smallcut}.
 
Case 2: $A$ has the form $\bot$. Then we put $\mathcal{R}_{\bot}(\pi^\prime,\pi^{\prime\prime}):= \mathit{i}_\bot (\pi^\prime) $, where $\mathit{i}_\bot$ is a nonexpansive mapping from Lemma \ref{inversion}.

Case 3: $A$ has the form $B\to C$. Then we put    
$$\mathcal R_{B \to C}(\pi^\prime,\pi^{\prime\prime}):=\mathcal R_C(\mathcal R_B(\mathcal \mathit{wk}_{\emptyset, C}(\mathit{ri}_{B\to C}(\pi^{\prime\prime})),\mathit{i}_{B\to C}(\pi^\prime)),\mathit{li}_{B\to C}(\pi^{\prime\prime}))\;,$$
where $\mathit{ri}_{B \to C}$, $\mathit{i}_{B \to C}$, $\mathit{li}_{B \to C}$ are nonexpansive mappings from Lemma \ref{inversion} and $\mathit{wk}_{\emptyset, C}$ is a nonexpansive mapping from Lemma \ref{strongweakening}.

Case 4: $A$ has the form $\Box B$. By the induction hypothesis, there is a $B$-reducing mapping $\mathcal{R}_B$. By Lemma \ref{boxcut} there is a $\Box B$-reducing mapping $\mathcal{R}_{\Box B}$. 

\end{proof}

A mapping $\mathcal{U}\colon \mathcal P \to\mathcal P$ is called \emph{root-preserving} if it maps $\infty$-proofs to $\infty$-proofs of the same sequents.
The set of all root-preserving nonexpansive mappings from $\mathcal P$ to $\mathcal P$ is denoted by $\mathcal{N}$. 
We consider $ \mathcal{N}$ as a metric space with the uniform metric:
$$d(\mathcal{U},\mathcal{V})=\sup_{\pi\in \mathcal P}d(\mathcal{U}(\pi),\mathcal{V}(\pi))\:.$$

\begin{lem}
$(\mathcal{N}, d)$ is a non-empty complete metric space.
\end{lem}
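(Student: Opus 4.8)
The plan is to verify the three defining properties of a complete metric space for $(\mathcal{N}, d)$: non-emptiness, well-definedness of the metric, and completeness. First I would establish non-emptiness, which is immediate since the identity mapping $\mathit{id}\colon \mathcal{P} \to \mathcal{P}$ is clearly root-preserving and nonexpansive (it trivially satisfies $\pi \sim_n \tau \Rightarrow \mathit{id}(\pi) \sim_n \mathit{id}(\tau)$), hence $\mathit{id} \in \mathcal{N}$. Next I would check that $d(\mathcal{U}, \mathcal{V}) = \sup_{\pi \in \mathcal{P}} d(\mathcal{U}(\pi), \mathcal{V}(\pi))$ is genuinely a metric on $\mathcal{N}$: the supremum of the bounded values $d(\mathcal{U}(\pi), \mathcal{V}(\pi)) \in [0,1]$ exists; symmetry and the triangle inequality are inherited pointwise from the metric $d$ on $\mathcal{P}$ (using Proposition \ref{ComplP}) and then passed through the supremum; and $d(\mathcal{U}, \mathcal{V}) = 0$ forces $\mathcal{U}(\pi) = \mathcal{V}(\pi)$ for every $\pi$ by the identity-of-indiscernibles property of $d$ on $\mathcal{P}$, so $\mathcal{U} = \mathcal{V}$.

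The substantive part is completeness. Given a Cauchy sequence $(\mathcal{U}_k)_{k \in \mathbb{N}}$ in $\mathcal{N}$, I would first define the candidate limit pointwise: for each fixed $\pi \in \mathcal{P}$, the sequence $(\mathcal{U}_k(\pi))_k$ is Cauchy in $\mathcal{P}$ because $d(\mathcal{U}_k(\pi), \mathcal{U}_l(\pi)) \leqslant d(\mathcal{U}_k, \mathcal{U}_l)$, so by completeness of $(\mathcal{P}, d)$ it converges to some limit, which I call $\mathcal{U}(\pi)$. This defines a mapping $\mathcal{U}\colon \mathcal{P} \to \mathcal{P}$. I would then need to verify three things: that $\mathcal{U}$ is root-preserving, that $\mathcal{U}$ is nonexpansive, and that $\mathcal{U}_k \to \mathcal{U}$ in the uniform metric.

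Root-preservation follows since for sufficiently large $k$ we have $d(\mathcal{U}(\pi), \mathcal{U}_k(\pi)) < 1$, forcing $\mathcal{U}(\pi) \sim_1 \mathcal{U}_k(\pi)$, so $\mathcal{U}(\pi)$ shares the root sequent of $\mathcal{U}_k(\pi)$, which equals the root of $\pi$. For nonexpansiveness, I would fix $n$ and assume $\pi \sim_n \tau$; choosing $k$ large enough that both $d(\mathcal{U}(\pi), \mathcal{U}_k(\pi)) \leqslant 2^{-n}$ and $d(\mathcal{U}(\tau), \mathcal{U}_k(\tau)) \leqslant 2^{-n}$, and using that $\mathcal{U}_k(\pi) \sim_n \mathcal{U}_k(\tau)$ by nonexpansiveness of $\mathcal{U}_k$, the ultrametric triangle inequality for $d$ (the relations $\sim_n$ are equivalence relations that are nested, so $d$ is an ultrametric) yields $d(\mathcal{U}(\pi), \mathcal{U}(\tau)) \leqslant 2^{-n}$, i.e. $\mathcal{U}(\pi) \sim_n \mathcal{U}(\tau)$. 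For uniform convergence, given $\varepsilon > 0$ pick $K$ so that $d(\mathcal{U}_k, \mathcal{U}_l) \leqslant \varepsilon$ for all $k, l \geqslant K$; fixing $k \geqslant K$ and any $\pi$, passing $l \to \infty$ in $d(\mathcal{U}_k(\pi), \mathcal{U}_l(\pi)) \leqslant \varepsilon$ gives $d(\mathcal{U}_k(\pi), \mathcal{U}(\pi)) \leqslant \varepsilon$ by continuity of $d$, and taking the supremum over $\pi$ gives $d(\mathcal{U}_k, \mathcal{U}) \leqslant \varepsilon$.

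The main obstacle, and the step deserving the most care, is confirming that $\mathcal{U}$ is nonexpansive: this is where the interplay between the pointwise limit and the uniform metric is delicate, and it relies crucially on the ultrametric character of $d$ (inherited from the nested equivalence relations $\sim_n$), which lets me combine the three estimates without the factor-of-two loss that an ordinary triangle inequality would incur. Everything else is the standard ``pointwise limit of a Cauchy sequence of bounded maps'' argument adapted to this ultrametric setting.
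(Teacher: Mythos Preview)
Your proof is correct. The paper takes a slightly more abstract route: it invokes the standard fact that, since $(\mathcal{P},d)$ is complete (Proposition~\ref{ComplP}) and the metric is bounded, the space $C(\mathcal{P},\mathcal{P})$ of continuous self-maps with the uniform metric is complete, and then simply asserts that $\mathcal{N}$ is a closed subset (leaving this to the reader), with non-emptiness witnessed by the identity map just as you do. Your argument is the direct, unpacked version of this: you build the pointwise limit and verify by hand that it is root-preserving, nonexpansive, and a uniform limit --- which is exactly what one would check to confirm that $\mathcal{N}$ is closed in $C(\mathcal{P},\mathcal{P})$. So the two proofs have the same mathematical content; yours is more self-contained, the paper's more concise. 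One minor remark: your claim that the ultrametric inequality is \emph{crucial} for the nonexpansiveness step is an overstatement --- even with the ordinary triangle inequality one has $d(\mathcal{U}(\pi),\mathcal{U}(\tau))\leqslant 2\varepsilon + d(\pi,\tau)$ for arbitrary $\varepsilon>0$, which still yields $d(\mathcal{U}(\pi),\mathcal{U}(\tau))\leqslant d(\pi,\tau)$; the ultrametric just makes the argument cleaner by letting you work directly with the relations~$\sim_n$.
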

\begin{proof}
By Lemma \ref{ComplP}, $\mathcal{P}$ is a complete metric space. Consequently the set $\mathit{C}(\mathcal{P},\mathcal{P})$ of all continuous mappings from $\mathcal{P}$ to $\mathcal{P}$ with the uniform metric forms a complete metric space. The reader will easily prove that $\mathcal{N}$ is a closed subset of $\mathit{C}(\mathcal{P},\mathcal{P})$. In addition, the set $\mathcal{N}$ is non-empty, because the identity mapping belongs to $\mathcal{N}$. Thus $(\mathcal{N}, d)$ is a non-empty complete metric space.     
\end{proof}

We define $\mathcal{N}_n:= \{\mathcal{U}\in \mathcal{N}\: \mid \: \mathcal{U}(\mathcal{P}) \subset \mathcal{P}_n\}$.

\begin{lem}\label{main-fragment cut}
There exists a mapping $\mathcal{E}^\ast \in \mathcal{N}_1$.
\end{lem}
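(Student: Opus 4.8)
The plan is to build $\mathcal{E}^{\ast}$ by recursion on the local height $\lvert \pi \rvert$, clearing the cuts of the main fragment while invoking a reducing mapping each time a cut is reached, its premises having already been cleared by the recursion. Concretely: if $\pi$ is an initial sequent, put $\mathcal{E}^{\ast}(\pi) := \pi$; if $\pi$ ends in $\mathsf{\to_R}$ or $\mathsf{refl}$ with premise $\pi^{\prime}$, reapply the same rule to $\mathcal{E}^{\ast}(\pi^{\prime})$; if $\pi$ ends in $\mathsf{\to_L}$ with premises $\pi^{\prime},\pi^{\prime\prime}$, reapply $\mathsf{\to_L}$ to $\mathcal{E}^{\ast}(\pi^{\prime}),\mathcal{E}^{\ast}(\pi^{\prime\prime})$; if $\pi$ ends in an application of $\mathsf{\Box}$ with left premise $\pi^{\prime}$ and right premise $\pi^{\prime\prime}$, put $\mathcal{E}^{\ast}(\pi) := \mathsf{\Box}(\mathcal{E}^{\ast}(\pi^{\prime}),\pi^{\prime\prime})$, recursing into the left premise only and leaving the right premise untouched; finally, if $\pi$ ends in a cut on a formula $A$ with premises $\pi^{\prime}$ (of $\Gamma\Rightarrow\Delta,A$) and $\pi^{\prime\prime}$ (of $A,\Gamma\Rightarrow\Delta$), put $\mathcal{E}^{\ast}(\pi) := \mathcal{R}_{A}(\mathcal{E}^{\ast}(\pi^{\prime}),\mathcal{E}^{\ast}(\pi^{\prime\prime}))$, using the $A$-reducing mapping supplied by the preceding lemma. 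Since right premises of $\mathsf{\Box}$ are leaves of the main fragment, every recursive call is on a proof of strictly smaller local height, so the recursion is well founded.

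I would then verify, by induction on $\lvert \pi \rvert$, the two non-metric requirements for membership in $\mathcal{N}_1$. Root preservation is immediate: each clause reproduces the conclusion sequent, using in the cut case that $\mathcal{R}_{A}$ is $A$-reducing. Membership $\mathcal{E}^{\ast}(\pi)\in\mathcal{P}_1$ follows from the inductive definition of $\mathcal{P}_1$: the rules $\mathsf{\to_L}$, $\mathsf{\to_R}$, $\mathsf{refl}$ preserve $\mathcal{P}_1$; the $\mathsf{\Box}$ clause lands in $\mathcal{P}_1$ because its left premise $\mathcal{E}^{\ast}(\pi^{\prime})$ is in $\mathcal{P}_1$ by the induction hypothesis while the right premise $\pi^{\prime\prime}$ lies in $\mathcal{P}_0=\mathcal{P}$; and in the cut case $\mathcal{R}_{A}$ outputs an element of $\mathcal{P}_1$ precisely because its two inputs $\mathcal{E}^{\ast}(\pi^{\prime}),\mathcal{E}^{\ast}(\pi^{\prime\prime})$ are already in $\mathcal{P}_1$ by the induction hypothesis. (This is also what makes the cut clause of the definition legitimate, as $\mathcal{R}_{A}$ is only defined on $\mathcal{P}_1\times\mathcal{P}_1$.)

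The principal remaining task is nonexpansiveness, namely $\pi\sim_m\tau \Rightarrow \mathcal{E}^{\ast}(\pi)\sim_m\mathcal{E}^{\ast}(\tau)$ for every $m$. I would fix $m$ and argue by induction on local height. The case $m=0$ is trivial; for $m\geqslant 1$ the relation $\pi\sim_m\tau$ forces $\pi$ and $\tau$ to have the same main fragment, hence the same local height and the same final rule (and, in the base case $\lvert\pi\rvert=0$, the same initial sequent, so $\pi=\tau$). For the rules $\mathsf{\to_L}$, $\mathsf{\to_R}$, $\mathsf{refl}$ and $\mathsf{cut}$, the appropriate clause of $\sim_m$ supplies $\sim_m$-equivalences of the premises; the induction hypothesis promotes these to $\sim_m$-equivalences after $\mathcal{E}^{\ast}$, and either the shared rule instance or the nonexpansiveness of $\mathcal{R}_{A}$ reassembles $\mathcal{E}^{\ast}(\pi)\sim_m\mathcal{E}^{\ast}(\tau)$. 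The delicate case is $\mathsf{\Box}$: clause 5 yields $\pi^{\prime}\sim_m\tau^{\prime}$ for the left premises but only $\pi^{\prime\prime}\sim_{m-1}\tau^{\prime\prime}$ for the right premises. Because $\mathcal{E}^{\ast}$ leaves right premises untouched, the equivalence $\pi^{\prime\prime}\sim_{m-1}\tau^{\prime\prime}$ transfers verbatim, while the induction hypothesis gives $\mathcal{E}^{\ast}(\pi^{\prime})\sim_m\mathcal{E}^{\ast}(\tau^{\prime})$; clause 5 then delivers the conclusion.

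The step I expect to be the main obstacle is exactly this alignment in the $\mathsf{\Box}$ case: one must check that leaving the right premise unchanged is precisely what renders the drop from level $m$ to level $m-1$ harmless, so that the local-height recursion of $\mathcal{E}^{\ast}$ is compatible with the level-indexed equivalences $\sim_m$, whose clock decreases only at right premises of $\mathsf{\Box}$. Once root preservation, the image condition $\mathcal{E}^{\ast}(\mathcal{P})\subset\mathcal{P}_1$, and nonexpansiveness are established, they together witness $\mathcal{E}^{\ast}\in\mathcal{N}_1$.
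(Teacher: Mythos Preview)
Your proposal is correct and matches the paper's proof essentially verbatim: the same recursion on $\lvert\pi\rvert$, the same clause-by-clause treatment (including leaving the right premise of $\mathsf{\Box}$ untouched and invoking $\mathcal{R}_A$ on the recursively processed cut premises), and the same three checks for $\mathcal{N}_1$. You actually supply more justification than the paper, which dismisses root preservation, the image condition, and nonexpansiveness with ``clearly'' and ``we also see that''; your analysis of the $\mathsf{\Box}$ case in the nonexpansiveness argument is exactly the point that makes the construction work.
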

\begin{proof}
Assume we have an $\infty$-proof $\pi$. We define $\mathcal{E}^\ast(\pi)$ by induction on $\lvert\pi\rvert$.

If $\lvert\pi\rvert=0$, then we put $\mathcal{E}^\ast(\pi)=\pi$. Otherwise, consider the last application of an inference rule in $\pi$ and define $\mathcal{E}^\ast$ as follows: 


\begin{gather*}
\AxiomC{$\pi_1$}
\noLine
\UnaryInfC{$\Gamma , B \Rightarrow  \Delta$}
\AxiomC{$\pi_2$}
\noLine
\UnaryInfC{$\Gamma \Rightarrow  A, \Delta$}
\LeftLabel{$\mathsf{\rightarrow_L}$}
\BinaryInfC{$\Gamma , A \rightarrow B \Rightarrow  \Delta$}
\DisplayProof 
\longmapsto
\AxiomC{$\mathcal{E}^\ast(\pi_1)$}
\noLine
\UnaryInfC{$\Delta, A $}
\AxiomC{$\mathcal{E}^\ast(\pi_2)$}
\noLine
\UnaryInfC{$\Gamma \Rightarrow  A, \Delta$}
\LeftLabel{$\mathsf{\rightarrow_L}$}
\RightLabel{ ,}
\BinaryInfC{$\Gamma , A \rightarrow B \Rightarrow  \Delta$}
\DisplayProof 
\end{gather*}
\begin{gather*}
\AxiomC{$\pi_0$}
\noLine
\UnaryInfC{$\Gamma, A \Rightarrow  B , \Delta$}
\LeftLabel{$\mathsf{\rightarrow_R}$}
\UnaryInfC{$\Gamma \Rightarrow  A \rightarrow B , \Delta$}
\DisplayProof 
\longmapsto
\AxiomC{$\mathcal{E}^\ast(\pi_0)$}
\noLine
\UnaryInfC{$\Gamma, A \Rightarrow  B , \Delta$}
\LeftLabel{$\mathsf{\rightarrow_R}$}
\RightLabel{ ,}
\UnaryInfC{$\Gamma \Rightarrow  A \rightarrow B , \Delta$}
\DisplayProof 
\end{gather*}
\begin{gather*}
\AxiomC{$\pi_0$}
\noLine
\UnaryInfC{$\Gamma, A, \Box A \Rightarrow   \Delta$}
\LeftLabel{$\mathsf{refl}$}
\UnaryInfC{$\Gamma , \Box A\Rightarrow  \Delta$}
\DisplayProof 
\longmapsto
\AxiomC{$\mathcal{E}^\ast(\pi_0)$}
\noLine
\UnaryInfC{$\Gamma, A, \Box A \Rightarrow   \Delta$}
\LeftLabel{$\mathsf{refl}$}
\RightLabel{ ,}
\UnaryInfC{$\Gamma , \Box A\Rightarrow  \Delta$}
\DisplayProof 
\end{gather*}
\begin{gather*}
\AxiomC{$\pi_1$}
\noLine
\UnaryInfC{$\Gamma, \Box \Pi \Rightarrow A, \Delta$}
\AxiomC{$\pi_2$}
\noLine
\UnaryInfC{$\Box \Pi \Rightarrow A$}
\LeftLabel{$\mathsf{\Box}$}
\BinaryInfC{$\Gamma, \Box \Pi \Rightarrow \Box A, \Delta$}
\DisplayProof 
\longmapsto
\AxiomC{$\mathcal{E}^\ast(\pi_1)$}
\noLine
\UnaryInfC{$\Gamma, \Box \Pi \Rightarrow A, \Delta$}
\AxiomC{$\pi_2$}
\noLine
\UnaryInfC{$\Box \Pi \Rightarrow A$}
\LeftLabel{$\mathsf{\Box}$}
\RightLabel{ ,}
\BinaryInfC{$\Gamma, \Box \Pi \Rightarrow \Box A, \Delta$}
\DisplayProof 
\end{gather*}
\begin{gather*}
\AxiomC{$\pi_1$}
\noLine
\UnaryInfC{$\Gamma  \Rightarrow  \Delta, A$}
\AxiomC{$\pi_2$}
\noLine
\UnaryInfC{$A, \Gamma \Rightarrow   \Delta$}
\LeftLabel{$\mathsf{cut}$}
\BinaryInfC{$\Gamma \Rightarrow  \Delta$}
\DisplayProof 
\longmapsto
\:\mathcal{R}_A (\mathcal{E}^\ast(\pi_1),\mathcal{E}^\ast(\pi_2)) \;.
\end{gather*}

Clearly, the mapping $\mathcal{E}^\ast$ is root-preserving, and $\mathcal{E}^\ast(\mathcal{P})\subset
\mathcal{P}_1$. We also see that $\mathcal{E}^\ast$ is nonexpansive, i.e. for any $n\in \mathbb{N}$ and any $\pi, \tau \in \mathcal P$ 
$$\pi \sim_n \tau \Rightarrow \mathcal{E}^\ast(\pi) \sim_n \mathcal{E}^\ast(\tau)\;.  $$
\end{proof}

Now we define a contractive operator $\mathcal F\colon \mathcal{N}\to \mathcal{N}$. The required cut-elimination mapping will be obtained as the fixed-point of $\mathcal{F}$. 

For a root-preserving nonexpansive mapping $\mathcal{U}$ and an $\infty$-proof $\pi$ of a sequent $\Gamma \Rightarrow \Delta$, we define $\mathcal{F} (\mathcal U)(\pi)$. 
In the case $\pi \in \mathcal{P}_1$, $\mathcal{F} (\mathcal U)(\pi)$ is introduced by induction on $\lvert \pi \rvert$.
If $\lvert\pi\rvert=0$, then we put $\mathcal{F} (\mathcal U)(\pi) =\pi$. Otherwise, consider the last application of an inference rule in $\pi$ and define $\mathcal{F} (\mathcal U)$ as follows: 

\begin{gather*}
\AxiomC{$\pi_1$}
\noLine
\UnaryInfC{$\Gamma , B \Rightarrow  \Delta$}
\AxiomC{$\pi_2$}
\noLine
\UnaryInfC{$\Gamma \Rightarrow  A, \Delta$}
\LeftLabel{$\mathsf{\rightarrow_L}$}
\BinaryInfC{$\Gamma , A \rightarrow B \Rightarrow  \Delta$}
\DisplayProof 
\longmapsto
\AxiomC{$\mathcal{F} (\mathcal U)(\pi_1)$}
\noLine
\UnaryInfC{$\Delta, A $}
\AxiomC{$\mathcal{F} (\mathcal U)(\pi_2)$}
\noLine
\UnaryInfC{$\Gamma \Rightarrow  A, \Delta$}
\LeftLabel{$\mathsf{\rightarrow_L}$}
\RightLabel{ ,}
\BinaryInfC{$\Gamma , A \rightarrow B \Rightarrow  \Delta$}
\DisplayProof 
\end{gather*}
\begin{gather*}
\AxiomC{$\pi_0$}
\noLine
\UnaryInfC{$\Gamma, A \Rightarrow  B , \Delta$}
\LeftLabel{$\mathsf{\rightarrow_R}$}
\UnaryInfC{$\Gamma \Rightarrow  A \rightarrow B , \Delta$}
\DisplayProof 
\longmapsto
\AxiomC{$\mathcal{F} (\mathcal U)(\pi_0)$}
\noLine
\UnaryInfC{$\Gamma, A \Rightarrow  B , \Delta$}
\LeftLabel{$\mathsf{\rightarrow_R}$}
\RightLabel{ ,}
\UnaryInfC{$\Gamma \Rightarrow  A \rightarrow B , \Delta$}
\DisplayProof 
\end{gather*}
\begin{gather*}
\AxiomC{$\pi_0$}
\noLine
\UnaryInfC{$\Gamma, A, \Box A \Rightarrow   \Delta$}
\LeftLabel{$\mathsf{refl}$}
\UnaryInfC{$\Gamma , \Box A\Rightarrow  \Delta$}
\DisplayProof 
\longmapsto
\AxiomC{$\mathcal{F} (\mathcal U)(\pi_0)$}
\noLine
\UnaryInfC{$\Gamma, A, \Box A \Rightarrow   \Delta$}
\LeftLabel{$\mathsf{refl}$}
\RightLabel{ ,}
\UnaryInfC{$\Gamma , \Box A\Rightarrow  \Delta$}
\DisplayProof 
\end{gather*}
\begin{gather*}
\AxiomC{$\pi_1$}
\noLine
\UnaryInfC{$\Gamma, \Box \Pi \Rightarrow A, \Delta$}
\AxiomC{$\pi_2$}
\noLine
\UnaryInfC{$\Box \Pi \Rightarrow A$}
\LeftLabel{$\mathsf{\Box}$}
\BinaryInfC{$\Gamma, \Box \Pi \Rightarrow \Box A, \Delta$}
\DisplayProof 
\longmapsto
\AxiomC{$\mathcal{F} (\mathcal U)(\pi_1)$}
\noLine
\UnaryInfC{$\Gamma, \Box \Pi \Rightarrow A, \Delta$}
\AxiomC{$\mathcal{U}(\pi_2)$}
\noLine
\UnaryInfC{$\Box \Pi \Rightarrow A$}
\LeftLabel{$\mathsf{\Box}$}
\RightLabel{ .}
\BinaryInfC{$\Gamma, \Box \Pi \Rightarrow \Box A, \Delta$}
\DisplayProof 
\end{gather*}
The mapping $\mathcal{F} (\mathcal U)$ is well defined on the set $\mathcal{P}_1$. If $\pi \notin \mathcal{P}_1$, then we put $\mathcal{F} (\mathcal U) (\pi):= \mathcal{F} (\mathcal U)(\mathcal{E}^\ast (\pi))$.

It can easily be checked that $\mathcal{F} (\mathcal U) $ is a root-preserving nonexpansive mapping.

\begin{lem}
We have that $d(\mathcal{F}(\mathcal{U}) , \mathcal{F}(\mathcal{V}))\leqslant \frac{1}{2}\cdot d(\mathcal{U}, \mathcal{V})$ for any mappings $\mathcal{U}, \mathcal{V} \in \mathcal{N}$.
\end{lem}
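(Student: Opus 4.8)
The plan is to unfold the uniform metric and reduce the inequality to a statement about the equivalences $\sim_n$. Using the characterization $\pi\sim_m\tau \iff d(\pi,\tau)\leqslant 2^{-m}$, the desired inequality $d(\mathcal{F}(\mathcal{U}),\mathcal{F}(\mathcal{V}))\leqslant\frac12\, d(\mathcal{U},\mathcal{V})$ follows once we establish, for every $n\in\mathbb{N}$, the key claim
$$\bigl(\forall\,\sigma\in\mathcal{P}:\ \mathcal{U}(\sigma)\sim_n\mathcal{V}(\sigma)\bigr)\ \Longrightarrow\ \bigl(\forall\,\pi\in\mathcal{P}:\ \mathcal{F}(\mathcal{U})(\pi)\sim_{n+1}\mathcal{F}(\mathcal{V})(\pi)\bigr).$$
Indeed, since $d(\mathcal{U},\mathcal{V})$ is a supremum of values drawn from $\{2^{-m}\}\cup\{0\}$, it equals $2^{-n}$ for some $n$ (or $0$); applying the claim with that $n$ yields $d(\mathcal{F}(\mathcal{U}),\mathcal{F}(\mathcal{V}))\leqslant 2^{-(n+1)}=\frac12\,d(\mathcal{U},\mathcal{V})$, and the case $d(\mathcal{U},\mathcal{V})=0$ is handled by applying the claim for all $n$. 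First I would reduce the claim to proofs in $\mathcal{P}_1$: by definition $\mathcal{F}(\mathcal{U})(\pi)=\mathcal{F}(\mathcal{U})(\mathcal{E}^\ast(\pi))$ for $\pi\notin\mathcal{P}_1$, and $\mathcal{E}^\ast(\pi)\in\mathcal{P}_1$ by Lemma~\ref{main-fragment cut}, so the general claim follows from the claim restricted to $\mathcal{P}_1$.

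For $\pi\in\mathcal{P}_1$ I would argue by induction on the local height $\lvert\pi\rvert$, following the very case analysis by which $\mathcal{F}(\mathcal{U})$ was defined. When $\lvert\pi\rvert=0$ both $\mathcal{F}(\mathcal{U})(\pi)$ and $\mathcal{F}(\mathcal{V})(\pi)$ equal the initial sequent $\pi$, hence are $\sim_m$-equivalent for every $m$. For the rules $\mathsf{\to_L}$, $\mathsf{\to_R}$, $\mathsf{refl}$ the operator $\mathcal{F}$ is applied recursively to the immediate subproofs, which lie in $\mathcal{P}_1$ and have strictly smaller local height; the induction hypothesis gives equivalence at level $n+1$ of the corresponding images, and clauses (3)--(4) in the definition of $\sim$ propagate this to the conclusion, as these rules preserve the index. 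Note that no cut case occurs here, because members of $\mathcal{P}_1$ carry no cut in their main fragment and the induction runs over the main fragment only.

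The decisive case is $\mathsf{\Box}$, where $\mathcal{U}$ and $\mathcal{V}$ actually enter the construction: $\mathcal{F}(\mathcal{U})(\pi)$ is formed by $\mathsf{\Box}$ from the left premise $\mathcal{F}(\mathcal{U})(\pi_1)$ and the right premise $\mathcal{U}(\pi_2)$, and symmetrically for $\mathcal{V}$. The left premise $\pi_1$ lies in $\mathcal{P}_1$ with $\lvert\pi_1\rvert<\lvert\pi\rvert$, so the induction hypothesis yields $\mathcal{F}(\mathcal{U})(\pi_1)\sim_{n+1}\mathcal{F}(\mathcal{V})(\pi_1)$, while for the right premise the hypothesis of the claim gives directly $\mathcal{U}(\pi_2)\sim_n\mathcal{V}(\pi_2)$. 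These are exactly the inputs demanded by clause (5): left premises equivalent at level $n+1$ and right premises at level $n$ force the conclusion to be equivalent at level $n+1$, whence $\mathcal{F}(\mathcal{U})(\pi)\sim_{n+1}\mathcal{F}(\mathcal{V})(\pi)$. This completes the induction. The gain of one level at the $\mathsf{\Box}$ step --- one must pass through a right premise of $\mathsf{\Box}$ before any discrepancy between $\mathcal{U}$ and $\mathcal{V}$ can be detected --- is precisely what produces the contraction factor $\tfrac12$, and it is the only subtle point; everything else is routine bookkeeping over the remaining rules.
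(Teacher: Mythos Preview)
Your proof is correct and follows essentially the same approach as the paper: reduce the metric inequality to the implication $\mathcal{U}\sim_n\mathcal{V}\Rightarrow\mathcal{F}(\mathcal{U})\sim_{n+1}\mathcal{F}(\mathcal{V})$, then establish the latter by induction on $\lvert\pi\rvert$ over $\mathcal{P}_1$, with the $\mathsf{\Box}$ case supplying the index gain via clause~(5) of the definition of $\sim_n$. The paper's proof merely sketches this induction, while you spell out the reduction to $\mathcal{P}_1$ via $\mathcal{E}^\ast$ and the case analysis explicitly; there is no substantive difference.
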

\begin{proof}

Let us write $\mathcal{U}\sim_n \mathcal{V}$ if $\mathcal{U}(\pi) \sim_n \mathcal{V}(\pi) $ for any $\pi \in \mathcal{P}$. We claim that for any $n \in \mathbb{N}$
$$\mathcal{U} \sim_n \mathcal{V} \Rightarrow \mathcal{F}(\mathcal{U}) \sim_{n+1} \mathcal{F}(\mathcal{V})\:.$$
Assume we have an $\infty$-proof $\pi$ and $\mathcal{U} \sim_n \mathcal{V}$. Now it can be easily proved by induction on $\lvert \pi \rvert$ that $\mathcal{F}(\mathcal{U})(\pi) \sim_{n+1} \mathcal{F}(\mathcal{V})$.

Further, we see that $\mathcal{U}\sim_n \mathcal{V}$ if and only if $d(\mathcal{U},\mathcal{V})\leqslant 2^{-n}$. 
Thus, the condition 
$$\forall n\: (\mathcal{U} \sim_n \mathcal{V} \Rightarrow \mathcal{F}(\mathcal{U}) \sim_{n+1} \mathcal{F}(\mathcal{V}))$$
is equivalent to $d(\mathcal{F}(\mathcal{U}) , \mathcal{F}(\mathcal{V}))\leqslant \frac{1}{2}\cdot d(\mathcal{U}, \mathcal{V})$. 
\end{proof}

\begin{lem}\label{moving the cut}
If $\mathcal{U}\in \mathcal{N}_n$, then $\mathcal{F}(\mathcal{U}) \in \mathcal{N}_{n+1} $. 
\end{lem}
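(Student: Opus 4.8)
The goal is to establish $\mathcal{F}(\mathcal{U})(\pi)\in\mathcal{P}_{n+1}$ for every $\infty$-proof $\pi$, since we already know $\mathcal{F}(\mathcal{U})\in\mathcal{N}$ and $\mathcal{N}_{n+1}$ is by definition the set of mappings in $\mathcal{N}$ sending $\mathcal{P}$ into $\mathcal{P}_{n+1}$. The plan is first to reduce to the case $\pi\in\mathcal{P}_1$. Indeed, if $\pi\notin\mathcal{P}_1$, then by the definition of $\mathcal{F}(\mathcal{U})$ we have $\mathcal{F}(\mathcal{U})(\pi)=\mathcal{F}(\mathcal{U})(\mathcal{E}^\ast(\pi))$, and since $\mathcal{E}^\ast\in\mathcal{N}_1$ by Lemma~\ref{main-fragment cut}, the proof $\mathcal{E}^\ast(\pi)$ already lies in $\mathcal{P}_1$; hence it suffices to prove the claim for $\pi\in\mathcal{P}_1$. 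For such $\pi$ the value $\mathcal{F}(\mathcal{U})(\pi)$ was defined by recursion on the local height $\lvert\pi\rvert$, so I would argue by induction on $\lvert\pi\rvert$, following exactly the case split of that definition.

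The base case $\lvert\pi\rvert=0$ is immediate: $\mathcal{F}(\mathcal{U})(\pi)=\pi$ has local height $0$ and so belongs to $\mathcal{P}_{n+1}$ by clause~(2) of the definition of $\mathcal{P}_{n+1}$. In the $(\mathsf{\to_L})$ case, both premises $\pi_1,\pi_2$ lie in $\mathcal{P}_1$ and have strictly smaller local height, so the induction hypothesis gives $\mathcal{F}(\mathcal{U})(\pi_1),\mathcal{F}(\mathcal{U})(\pi_2)\in\mathcal{P}_{n+1}$; since $\mathcal{F}(\mathcal{U})(\pi)$ is obtained from these two by $(\mathsf{\to_L})$, clause~(3) yields $\mathcal{F}(\mathcal{U})(\pi)\in\mathcal{P}_{n+1}$. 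The $(\mathsf{\to_R})$ and $(\mathsf{refl})$ cases are handled identically, applying the induction hypothesis to the single premise $\pi_0$ and then invoking clause~(4).

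The $(\mathsf{\Box})$ case is the crucial one, and it is where the index increases from $n$ to $n+1$. Here $\mathcal{F}(\mathcal{U})(\pi)$ is obtained by $(\mathsf{\Box})$ from the left premise $\mathcal{F}(\mathcal{U})(\pi_1)$ and the right premise $\mathcal{U}(\pi_2)$. The left premise $\pi_1$ belongs to $\mathcal{P}_1$ and has smaller local height than $\pi$ (it stays inside the main fragment), so the induction hypothesis gives $\mathcal{F}(\mathcal{U})(\pi_1)\in\mathcal{P}_{n+1}$. For the right premise I do not recurse with $\mathcal{F}(\mathcal{U})$—the right premise of $(\mathsf{\Box})$ is precisely where the main fragment is cut, so the recursion on local height does not reach it—and instead I use the hypothesis $\mathcal{U}\in\mathcal{N}_n$, which guarantees $\mathcal{U}(\pi_2)\in\mathcal{P}_n$. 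Now clause~(5) of the definition of $\mathcal{P}_{n+1}$ applies verbatim: a $(\mathsf{\Box})$-inference whose left premise is in $\mathcal{P}_{n+1}$ and whose right premise is in $\mathcal{P}_n$ has its conclusion in $\mathcal{P}_{n+1}$. Thus $\mathcal{F}(\mathcal{U})(\pi)\in\mathcal{P}_{n+1}$, completing the induction.

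I expect the only subtle point to be bookkeeping the index shift in the $(\mathsf{\Box})$ case: one must notice that the recursion defining $\mathcal{F}(\mathcal{U})$ treats the two premises of $(\mathsf{\Box})$ asymmetrically—applying $\mathcal{F}(\mathcal{U})$ to the left premise but $\mathcal{U}$ to the right—and that this asymmetry is matched exactly by clause~(5), which demands membership in $\mathcal{P}_{n+1}$ on the left but only in $\mathcal{P}_n$ on the right. Everything else is a direct transcription of the inductive definitions of $\mathcal{F}(\mathcal{U})$ and of $\mathcal{P}_{n+1}$.
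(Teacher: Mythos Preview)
Your proposal is correct and follows essentially the same approach as the paper: reduce to $\pi\in\mathcal{P}_1$ via $\mathcal{E}^\ast$, then induct on $\lvert\pi\rvert$. You have spelled out the inductive cases that the paper leaves as ``not hard to prove''; in particular, your handling of the $(\mathsf{\Box})$ case correctly pinpoints where the index shift from $n$ to $n+1$ occurs (note also that the paper's proof text contains a harmless typo, writing $\mathcal{P}_n$ where $\mathcal{P}_{n+1}$ is meant).
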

\begin{proof}

Assume we have an $\infty$-proof $\pi$ and $\mathcal{U}\in \mathcal{N}_n$. We claim $\mathcal{F}(\mathcal{U})(\pi) \in \mathcal{P}_n $. 

If $\pi \in \mathcal{P}_1$, then it is not hard to prove by induction on $\lvert \pi \rvert$ that $\mathcal{F}(\mathcal{U})(\pi) \in \mathcal{P}_n $.
If $\pi \notin \mathcal{P}_1$, then $\mathcal{E}^\ast (\pi)\in \mathcal{P}_1$ by Lemma \ref{main-fragment cut}. Thus $\mathcal{F}(\mathcal{U})(\pi) = \mathcal{F}(\mathcal{U})(\mathcal{E}^\ast (\pi)) \in \mathcal{P}_n$ by the previous case.

\end{proof}

\begin{lem}
There exists a mapping $\mathcal{E} $ such that $\mathcal{E} \in \mathcal{N}_n$ for any $n\in \mathbb{N}$.
\end{lem}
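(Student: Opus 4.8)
The plan is to obtain $\mathcal{E}$ as the unique fixed point of the contraction $\mathcal{F}$ and then to check, level by level, that this fixed point lands in every $\mathcal{N}_n$. Since $(\mathcal{N},d)$ is a non-empty complete metric space and $\mathcal{F}$ is $\tfrac12$-contractive by the two preceding lemmata, the Banach fixed-point theorem yields a unique $\mathcal{E}\in\mathcal{N}$ with $\mathcal{F}(\mathcal{E})=\mathcal{E}$, and moreover $\mathcal{E}=\lim_{k\to\infty}\mathcal{F}^k(\mathrm{id})$, where $\mathrm{id}\in\mathcal{N}$ is the identity mapping used to witness that $\mathcal{N}$ is non-empty. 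Note that establishing $\mathcal{E}\in\mathcal{N}_n$ for all $n$ gives exactly the desired cut-elimination mapping, since the intersection of all $\mathcal{P}_n$ consists of the cut-free $\infty$-proofs.

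First I would locate the iterates relative to the sets $\mathcal{N}_n$. Because $\mathcal{P}_0=\mathcal{P}$, we have $\mathrm{id}\in\mathcal{N}_0$, so $n$ applications of Lemma~\ref{moving the cut} give $\mathcal{F}^n(\mathrm{id})\in\mathcal{N}_n$, that is, $\mathcal{F}^n(\mathrm{id})(\pi)\in\mathcal{P}_n$ for every $\pi$. Next I would control the distance from this iterate to the fixed point: using $\mathcal{F}(\mathcal{E})=\mathcal{E}$, contractivity, and $d\leqslant 1$, one obtains
$$d(\mathcal{F}^n(\mathrm{id}),\mathcal{E})=d(\mathcal{F}^n(\mathrm{id}),\mathcal{F}^n(\mathcal{E}))\leqslant 2^{-n}\,d(\mathrm{id},\mathcal{E})\leqslant 2^{-n},$$
which is to say $\mathcal{F}^n(\mathrm{id})\sim_n\mathcal{E}$, and hence $\mathcal{F}^n(\mathrm{id})(\pi)\sim_n\mathcal{E}(\pi)$ for every $\pi$.

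The crux is then a closure property that is not among the stated lemmata, and which I expect to be the main obstacle: for each $n$, the set $\mathcal{P}_n$ is a union of $\sim_n$-equivalence classes, i.e. $\pi\in\mathcal{P}_n$ together with $\pi\sim_n\tau$ implies $\tau\in\mathcal{P}_n$. I would prove this by a simultaneous induction mirroring the parallel inductive definitions of $\sim_n$ and $\mathcal{P}_n$. The base case $n=0$ is trivial since $\mathcal{P}_0=\mathcal{P}$. When $\lvert\pi\rvert=0$ the relation $\sim_n$ forces $\tau=\pi$, and for the $\mathsf{\to_L}$, $\mathsf{\to_R}$, $\mathsf{refl}$ cases it forces $\tau$ to end in the same rule as $\pi$, so the corresponding $\mathcal{P}_n$-clause transfers membership to $\tau$. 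The $\mathsf{\Box}$ case is the delicate one: it applies the induction hypothesis at level $n+1$ to the structurally smaller left premise and at level $n$ to the right premise, which is exactly the shift appearing in clause~5 of the definition of $\mathcal{P}_n$; this is why the induction must be run on the derivation structure rather than on $n$ alone. Finally, the $\mathsf{cut}$ case is vacuous, since a proof ending in $\mathsf{cut}$ of positive height is never in $\mathcal{P}_n$ for $n\geqslant 1$, there being no $\mathcal{P}_n$-clause for $\mathsf{cut}$.

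Granting this closure property, the conclusion is immediate, and one may equivalently phrase it by noting that each $\mathcal{N}_n$ is closed in $\mathcal{N}$. Fixing $n$ and an arbitrary $\pi$, we have $\mathcal{F}^n(\mathrm{id})(\pi)\in\mathcal{P}_n$ and $\mathcal{F}^n(\mathrm{id})(\pi)\sim_n\mathcal{E}(\pi)$, so the closure property gives $\mathcal{E}(\pi)\in\mathcal{P}_n$. As $\pi$ was arbitrary, $\mathcal{E}\in\mathcal{N}_n$, and as $n$ was arbitrary, $\mathcal{E}\in\mathcal{N}_n$ for every $n\in\mathbb{N}$, as required.
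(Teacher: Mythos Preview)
Your argument is correct, but it takes a noticeably longer route than the paper's. You obtain $\mathcal{E}$ as the fixed point of $\mathcal{F}$, then approximate it by the iterates $\mathcal{F}^n(\mathrm{id})\in\mathcal{N}_n$, and finally transfer membership in $\mathcal{P}_n$ across $\sim_n$ via an auxiliary closure lemma (each $\mathcal{P}_n$ is a union of $\sim_n$-classes), which you correctly prove by induction on $n$ with a subinduction on $\lvert\pi\rvert$, the $\Box$ clause being the place where the two levels interact. The paper bypasses all of this: once $\mathcal{F}(\mathcal{E})=\mathcal{E}$ is known, Lemma~\ref{moving the cut} applied to $\mathcal{E}$ itself gives $\mathcal{E}\in\mathcal{N}_n\Rightarrow\mathcal{E}=\mathcal{F}(\mathcal{E})\in\mathcal{N}_{n+1}$, so a one-line induction starting from $\mathcal{E}\in\mathcal{N}_0=\mathcal{N}$ finishes the proof. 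Your approach, in effect, re-proves that the $\mathcal{N}_n$ are closed in $(\mathcal{N},d)$ and then uses the convergence of the iterates; the paper exploits the fixed-point equation directly and needs neither the iterates nor the closure property. What your detour does buy is an explicit, independent fact of some interest (the $\sim_n$-saturation of $\mathcal{P}_n$), but it is not needed for this lemma.
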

\begin{proof}
We have that $\mathcal F\colon \mathcal{N} \to \mathcal{N}$ is a contractive operator. By the Banach fixed-point theorem, there exists a root-preserving nonexpansive mapping $\mathcal E$ such that $\mathcal F(\mathcal E)=\mathcal E$. Trivially, $\mathcal{E} \in \mathcal{N}_0=\mathcal{N}$. Hence $\mathcal{E}$ belongs to the intersection of all $\mathcal{N}_n$ for $n\in \mathbb{N}$ by Lemma \ref{moving the cut}.

\end{proof}

\begin{thm}[cut-elimination]
\label{infcuttoinf}
If $\mathsf{Grz_\infty} +\mathsf{cut}\vdash\Gamma\Rightarrow\Delta$, then $\mathsf{Grz_\infty}\vdash\Gamma\Rightarrow\Delta$.
\end{thm}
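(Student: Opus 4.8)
The plan is to combine the mapping $\mathcal{E}$ constructed in the preceding lemma with the characterization of the intersection $\bigcap_n \mathcal{P}_n$; at this point essentially all of the real work has been done, and only the final assembly remains.

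First I would fix a sequent $\Gamma \Rightarrow \Delta$ with $\mathsf{Grz}_\infty + \mathsf{cut} \vdash \Gamma \Rightarrow \Delta$ and choose an $\infty$-proof $\pi \in \mathcal{P}$ whose root is marked by $\Gamma \Rightarrow \Delta$. I would then apply the mapping $\mathcal{E}$ to obtain $\mathcal{E}(\pi)$. Since $\mathcal{E}$ is root-preserving, $\mathcal{E}(\pi)$ is again an $\infty$-proof whose root is $\Gamma \Rightarrow \Delta$, so it suffices to check that $\mathcal{E}(\pi)$ contains no cut.

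For this I would invoke that $\mathcal{E} \in \mathcal{N}_n$ for every $n \in \mathbb{N}$. By the definition of $\mathcal{N}_n$ this gives $\mathcal{E}(\pi) \in \mathcal{P}_n$ for every $n$, so $\mathcal{E}(\pi)$ lies in $\bigcap_{n} \mathcal{P}_n$. By the lemma characterizing this intersection, $\bigcap_n \mathcal{P}_n$ consists exactly of the $\infty$-proofs of $\mathsf{Grz}_\infty$, i.e. those with no application of the cut rule anywhere. Hence $\mathcal{E}(\pi)$ is a cut-free $\infty$-proof of $\Gamma \Rightarrow \Delta$, which yields $\mathsf{Grz}_\infty \vdash \Gamma \Rightarrow \Delta$.

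The conceptual heart of the argument is not this last step but the construction of $\mathcal{E}$, which is why I expect no genuine obstacle here. The map $\mathcal{E}$ is the Banach fixed point of the contractive operator $\mathcal{F}$, and the decisive property is that $\mathcal{F}$ sends $\mathcal{N}_n$ into $\mathcal{N}_{n+1}$: starting from $\mathcal{E} \in \mathcal{N}_0 = \mathcal{N}$ and iterating $\mathcal{F}(\mathcal{E}) = \mathcal{E}$ drives $\mathcal{E}$ into every $\mathcal{N}_n$, which is precisely the statement that the remaining cuts are pushed off to infinity along the main fragments while the branch condition on $\infty$-proofs guarantees that the limit is a legitimate proof. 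Thus the difficulty is entirely absorbed by the already-established machinery — the $A$-reducing mappings, the contractivity of $\mathcal{F}$, and the completeness of $(\mathcal{N}, d)$ — and the theorem follows immediately once these are in hand.
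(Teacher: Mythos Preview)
Your proposal is correct and follows exactly the paper's approach: take an $\infty$-proof $\pi$ of $\Gamma\Rightarrow\Delta$ in $\mathsf{Grz}_\infty+\mathsf{cut}$, apply the root-preserving mapping $\mathcal{E}$, and use $\mathcal{E}\in\bigcap_n\mathcal{N}_n$ together with the characterization of $\bigcap_n\mathcal{P}_n$ to conclude that $\mathcal{E}(\pi)$ is a cut-free $\infty$-proof of the same sequent. Your write-up simply makes explicit the justifications that the paper leaves implicit in its two-line proof.
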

\begin{proof}

Take an $\infty$-proof of the sequent $\Gamma\Rightarrow\Delta$ in the system $\mathsf{Grz_\infty} +\mathsf{cut}$ and apply the mapping $\mathcal E$ to it. You will get an $\infty$-proof of the same sequent in the system $\mathsf{Grz_\infty}$.
\end{proof}

\section{Ordinary and non-well-founded proofs}
\label{s5}
In this section we define two translations that connect ordinary and non-well-founded sequent calculi for $\mathsf{Grz}$. 

\begin{lem}\label{AtoA}
We have  $\mathsf{Grz}_{\infty} \vdash \Gamma,A\Rightarrow A,\Delta$ for any sequent $\Gamma \Rightarrow \Delta$ and any formula $A$.
\end{lem}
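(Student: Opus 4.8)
The plan is to prove the statement by induction on the structure of the formula $A$, constructing in each case a \emph{finite} $\infty$-proof; since a finite tree has no infinite branches, the side condition on $\infty$-proofs is satisfied vacuously, so each constructed tree is a legitimate $\infty$-proof in $\mathsf{Grz}_\infty$. The two base cases are immediate: if $A$ is an atomic proposition $p$, then $\Gamma, p \Rightarrow p, \Delta$ is already an initial sequent, and if $A = \bot$, then $\Gamma, \bot \Rightarrow \bot, \Delta$ is an initial sequent as well, since any sequent of the form $\Gamma, \bot \Rightarrow \Delta'$ is initial.

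For the implication case $A = B \to C$, I would first apply the rule $\mathsf{\to_R}$ to the goal $\Gamma, B\to C \Rightarrow B\to C, \Delta$, reducing it to $\Gamma, B\to C, B \Rightarrow C, \Delta$. Applying $\mathsf{\to_L}$ to the antecedent occurrence of $B \to C$ then splits this into the two premises $\Gamma, B, C \Rightarrow C, \Delta$ and $\Gamma, B \Rightarrow B, C, \Delta$. The first is an instance of the induction hypothesis for $C$ (with left context $\Gamma, B$ and right context $\Delta$) and the second an instance of the induction hypothesis for $B$ (with left context $\Gamma$ and right context $C, \Delta$), so both are provable.

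The modal case $A = \Box B$ is the only one requiring care, and it is where I expect the main, though still modest, obstacle to lie. I would apply the rule $\mathsf{\Box}$ to the goal $\Gamma, \Box B \Rightarrow \Box B, \Delta$, taking the distinguished boxed context $\Box\Pi$ to be the single formula $\Box B$ and the principal succedent to be $B$. This yields two premises: the left premise $\Gamma, \Box B \Rightarrow B, \Delta$ and the right premise $\Box B \Rightarrow B$, where the rule $\mathsf{\Box}$ has discarded the side contexts $\Gamma$ and $\Delta$. To each premise I would apply $\mathsf{refl}$ to the antecedent $\Box B$; because $\mathsf{refl}$ retains $\Box B$ while introducing a fresh copy of $B$, this produces $\Gamma, B, \Box B \Rightarrow B, \Delta$ and $B, \Box B \Rightarrow B$ respectively. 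Both are now instances of the induction hypothesis for the subformula $B$ (with left contexts $\Gamma, \Box B$ and $\Box B$, and right contexts $\Delta$ and the empty multiset), which closes the case.

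The delicate point is precisely that the right premise of $\mathsf{\Box}$ strips away the side contexts $\Gamma$ and $\Delta$, so one cannot directly reuse an identity on $\Box B$ there; it is the reflexivity rule $\mathsf{refl}$ that lets one descend to the strictly smaller formula $B$ and invoke the induction hypothesis. Once this is observed, the modal case presents no genuine difficulty, and the whole construction remains finite.
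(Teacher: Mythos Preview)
Your proof is correct and follows exactly the approach the paper intends: the paper's proof reads in its entirety ``Standard induction on the structure of $A$,'' and what you have written is precisely that standard induction, with the modal case handled via $\mathsf{\Box}$ followed by $\mathsf{refl}$ to descend to the subformula $B$. Your remark that the resulting derivation is finite (so the global condition on infinite branches is vacuous) is a useful sanity check that the paper leaves implicit.
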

\begin{proof}
Standard induction on the structure of $A$.
\end{proof}

\begin{lem}\label{Grz-schema}
We have $\mathsf{Grz}_{\infty}\vdash\Box(\Box(A \rightarrow \Box A) \rightarrow A) \Rightarrow A$ for any formula $A$.
\end{lem}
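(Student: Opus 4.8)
The plan is to generalize the concrete $\infty$-proof of the atomic instance $\Box(\Box(p\to\Box p)\to p)\Rightarrow p$ that is displayed in Section~\ref{s3}. Writing $F=\Box(\Box(A\to\Box A)\to A)$, I would build exactly the same tree with the arbitrary formula $A$ in place of the atom $p$:
\begin{gather*}
\AxiomC{Lem.~\ref{AtoA}}
\noLine
\UnaryInfC{$ F, A\Rightarrow A$}
\AxiomC{Lem.~\ref{AtoA}}
\noLine
\UnaryInfC{$ F,A\Rightarrow \Box A,A$}
\LeftLabel{$\mathsf{\to_R}$}
\UnaryInfC{$ F \Rightarrow A\to\Box A,A$}
\AxiomC{Lem.~\ref{AtoA}}
\noLine
\UnaryInfC{$A, F \Rightarrow A$}
\AxiomC{$\vdots$}
\noLine
\UnaryInfC{$ F \Rightarrow A$}
\LeftLabel{$\mathsf{\Box}$}
\BinaryInfC{$A, F \Rightarrow \Box A$}
\LeftLabel{$\mathsf{\to_R}$}
\UnaryInfC{$ F \Rightarrow A\to\Box A$}
\LeftLabel{$\mathsf{\Box}$}
\BinaryInfC{$ F \Rightarrow \Box(A\to \Box A),A$}
\LeftLabel{$\mathsf{\to_L}$}
\BinaryInfC{$\Box(A \rightarrow \Box A) \rightarrow A, F \Rightarrow A$}
\LeftLabel{$\mathsf{refl}$}
\RightLabel{ .}
\UnaryInfC{$F \Rightarrow A$}
\DisplayProof
\end{gather*}

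The key steps are as follows. Reading bottom-up, the root $F\Rightarrow A$ is obtained by $\mathsf{refl}$ on the boxed formula $F$, then by $\mathsf{\to_L}$ on $\Box(A\to\Box A)\to A$; the right premise $F\Rightarrow \Box(A\to\Box A),A$ of this $\mathsf{\to_L}$ is introduced by $\mathsf{\Box}$, whose right premise $F\Rightarrow A\to\Box A$ unfolds through $\mathsf{\to_R}$ and a second $\mathsf{\Box}$ to yield, at the right premise of that innermost $\mathsf{\Box}$, the sequent $F\Rightarrow A$ again. This coincidence with the root is the circular back-edge that makes the object non-well-founded. The only change from the atomic case is that the three leaves $F,A\Rightarrow A$, $F,A\Rightarrow \Box A,A$, and $A,F\Rightarrow A$ are no longer initial sequents once $A$ is compound; instead I would attach to each of them the $\infty$-proof supplied by Lemma~\ref{AtoA}.

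To make the self-referential object precise I would define it as the infinite unfolding of this finite cyclic pattern (equivalently, as the limit in $(\mathcal P,d)$ of its finite approximations), and then verify the global correctness condition on infinite branches. The unique new infinite branch is the one that repeatedly traverses the back-edge: each pass around the loop goes through the right premise $F\Rightarrow A$ of the innermost $\mathsf{\Box}$-inference, so it meets a right premise of $\mathsf{\Box}$ infinitely often. Every other infinite branch lies inside one of the Lemma~\ref{AtoA} subproofs, and those already satisfy the condition because they are themselves $\infty$-proofs. Hence the whole tree is a legitimate $\infty$-proof of $F\Rightarrow A$ in $\mathsf{Grz}_\infty$.

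The only substantive point to check is this correctness condition; everything else is a routine substitution into the displayed example together with an appeal to Lemma~\ref{AtoA}. I expect the condition to be the main obstacle only in the sense that it is where the argument could fail, but it holds by construction precisely because the back-edge targets a \emph{right} premise of $\mathsf{\Box}$ — exactly the feature of $\mathsf{Grz}_\infty$ that licenses such circular derivations. The remaining formal nicety, presenting the cyclic proof rigorously as an unfolding or metric limit rather than an informal diagram with ``$\vdots$'', is standard for non-well-founded proofs and would not require any new idea.
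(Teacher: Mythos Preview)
Your proposal is correct and follows exactly the same approach as the paper: substitute $A$ for $p$ in the displayed $\infty$-proof from Section~\ref{s3} and replace the three former axiom leaves by the $\infty$-proofs supplied by Lemma~\ref{AtoA}. Your additional remarks on the global correctness condition and on making the cyclic object precise as an unfolding are accurate elaborations of what the paper leaves implicit.
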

\begin{proof}
Consider an example of $\infty$--proof for the sequent $\Box(\Box(p \rightarrow \Box p) \rightarrow p) \Rightarrow p$ from Section \ref{s3}. We transform this example into an $\infty$--proof for $\Box(\Box(A \rightarrow \Box A) \rightarrow A) \Rightarrow A$ by replacing $p$ with $A$ and adding required $\infty$--proofs instead of initial sequents using Lemma \ref{AtoA}.  
\end{proof}
\begin{thm}\label{seqtoinfcut}
If $\mathsf{Grz_{Seq}}+\mathsf{cut}\vdash\Gamma\Rightarrow\Delta$, then $\mathsf{Grz}_{\infty}+\mathsf{cut}\vdash\Gamma\Rightarrow\Delta$.
\end{thm}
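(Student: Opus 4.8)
The plan is to argue by induction on the height of a given derivation of $\Gamma \Rightarrow \Delta$ in $\mathsf{Grz_{Seq}} + \mathsf{cut}$, simulating each finitary inference by finitely many rules of $\mathsf{Grz}_{\infty} + \mathsf{cut}$ stacked on top of previously constructed $\infty$-proofs. The two calculi literally share the rules $\mathsf{\to_L}$, $\mathsf{\to_R}$, $\mathsf{refl}$ and $\mathsf{cut}$, so for these the induction step is immediate: I simply reapply the same rule to the $\infty$-proofs supplied by the induction hypothesis. An initial sequent $\Gamma, \bot \Rightarrow \Delta$ is also an initial sequent of $\mathsf{Grz}_{\infty}$, while a general initial sequent $\Gamma, A \Rightarrow A, \Delta$ with non-atomic $A$ is provable in $\mathsf{Grz}_{\infty}$ by Lemma \ref{AtoA}. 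Hence all the routine cases are discharged, and the whole content of the theorem sits in the simulation of $\mathsf{\Box_{Grz}}$.

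For the rule $\mathsf{\Box_{Grz}}$, the induction hypothesis gives an $\infty$-proof of its premise $\Box\Pi, \Box(A \to \Box A) \Rightarrow A$, and I must produce one of $\Gamma, \Box\Pi \Rightarrow \Box A, \Delta$. First I would apply $\mathsf{\to_R}$ to the premise, obtaining $\Box\Pi \Rightarrow \Box(A \to \Box A) \to A$; abbreviating $C := \Box(A \to \Box A) \to A$, this is $\Box\Pi \Rightarrow C$. Using this $\infty$-proof for both premises of the rule $\mathsf{\Box}$ (with empty side contexts) I obtain $\Box\Pi \Rightarrow \Box C$, that is $\Box\Pi \Rightarrow \Box(\Box(A \to \Box A) \to A)$. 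Lemma \ref{Grz-schema} supplies an $\infty$-proof of $\Box(\Box(A \to \Box A) \to A) \Rightarrow A$, i.e.\ of $\Box C \Rightarrow A$. After suitably weakening both $\infty$-proofs by the strongly admissible rule $\mathsf{wk}$ of Lemma \ref{strongweakening}, so that they become the two premises of a cut on the formula $\Box C$, a single application of $\mathsf{cut}$ yields $\Box\Pi \Rightarrow A$. One more application of $\mathsf{\Box}$ (again with empty side contexts, reusing this $\infty$-proof for both premises) gives $\Box\Pi \Rightarrow \Box A$, and a final weakening $\mathsf{wk}_{\Gamma, \Delta}$ produces exactly $\Gamma, \Box\Pi \Rightarrow \Box A, \Delta$. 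Thus $\mathsf{\Box_{Grz}}$ is faithfully simulated, and this is the step I expect to be the crux: the non-trivial move is discharging the hypothesis $\Box(A \to \Box A)$ by boxing the implication $C$ and cutting against the Grzegorczyk schema of Lemma \ref{Grz-schema}.

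Finally, I would verify that each object produced in this way really is an $\infty$-proof, i.e.\ that every infinite branch passes through a right premise of $\mathsf{\Box}$ infinitely often. This holds because at every induction step only finitely many new rule applications are placed above finitely many previously constructed $\infty$-proofs; consequently any infinite branch of the resulting tree eventually enters one of these sub-$\infty$-proofs and inherits the required property from it. The base $\infty$-proofs coming from Lemma \ref{AtoA} and Lemma \ref{Grz-schema} satisfy the condition by construction, so the induction goes through and the translation outputs genuine $\infty$-proofs of $\mathsf{Grz}_{\infty} + \mathsf{cut}$.
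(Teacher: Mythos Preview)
Your proposal is correct and follows essentially the same approach as the paper: induction on the finitary derivation, with the only nontrivial case $\mathsf{\Box_{Grz}}$ handled by applying $\mathsf{\to_R}$ to the premise to obtain $\Box\Pi \Rightarrow G$ (where $G = \Box(A\to\Box A)\to A$), boxing it, cutting against the Grzegorczyk schema of Lemma~\ref{Grz-schema} to get $\Box\Pi \Rightarrow A$, and then boxing once more. The only differences from the paper are cosmetic placements of weakening (the paper carries the side formulas through the final $\mathsf{\Box}$ via its left premise, whereas you weaken afterwards), which do not affect the argument.
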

\begin{proof}
Assume $\pi$ is a proof of $\Gamma\Rightarrow\Delta$ in $\mathsf{Grz_{Seq}}+\mathsf{cut}$. By induction on the size of $\pi$ we prove $\mathsf{Grz}_{\infty}+\mathsf{cut}\vdash\Gamma\Rightarrow\Delta$. 

If $\Gamma \Rightarrow \Delta $ is an initial sequent of $\mathsf{Grz_{Seq}}+\mathsf{cut}$, then it is provable in $\mathsf{Grz}_{\infty}+\mathsf{cut}$ by Lemma \ref{AtoA}.
Otherwise, consider the last application of an inference rule in $\pi$. 

The only non-trivial case is when the proof $\pi$ has the form 
\begin{gather*}
\AxiomC{$\pi^\prime$}
\noLine
\UnaryInfC{$\Box \Pi,\Box(A\to\Box A)\Rightarrow A$}
\LeftLabel{$\mathsf{\Box_{Grz}}$}
\RightLabel{ ,}
\UnaryInfC{$\Sigma,\Box\Pi\Rightarrow \Box A, \Lambda$}
\DisplayProof
\end{gather*} 
where $\Sigma,\Box\Pi = \Gamma$ and $\Box A, \Lambda = \Delta$. By the induction hypothesis there is an $\infty$--proof $\xi$ of $\Box \Pi,\Box(A\to\Box A)\Rightarrow A$ in $\mathsf{Grz}_{\infty}+\mathsf{cut}$.

We have the following $\infty$--proof $\lambda$ of $\Box \Pi\Rightarrow A$ in $\mathsf{Grz}_{\infty}+\mathsf{cut} $:
\begin{gather*}
\AxiomC{$\xi^{\prime}$}
\noLine
\UnaryInfC{$\Box \Pi,\Box(A\to\Box A)\Rightarrow A,A$}
\LeftLabel{$\mathsf{\to_R}$}
\UnaryInfC{$\Box \Pi\Rightarrow G,A$}
\AxiomC{$\xi$}
\noLine
\UnaryInfC{$\Box \Pi,\Box(A\to\Box A)\Rightarrow A$}
\LeftLabel{$\mathsf{\to_R}$}
\UnaryInfC{$\Box \Pi\Rightarrow G$}
\LeftLabel{$\Box$}
\BinaryInfC{$\Box \Pi\Rightarrow\Box G,A$}
\AxiomC{$\theta$}
\noLine
\UnaryInfC{$\Box\Pi,\Box G \Rightarrow A$}
\LeftLabel{$\mathsf{cut}$}
\RightLabel{ ,}
\BinaryInfC{$\Box\Pi\Rightarrow A$}
\DisplayProof
\end{gather*}
where $G= \Box(A \rightarrow \Box A) \rightarrow A$, $\xi^{\prime}$ is an $\infty$--proof of $\Box \Pi,\Box(A\to\Box A)\Rightarrow A,A$ obtained from $\xi$ by Lemma \ref{strongweakening} and $\theta$ is an $\infty$--proof of $\Box\Pi,\Box G \Rightarrow A$, which exists by Lemma \ref{Grz-schema} and Lemma \ref{strongweakening}.

The required $\infty$--proof for $\Sigma,\Box\Pi\Rightarrow \Box A, \Delta$ has the form
\begin{gather*}
\AxiomC{$\lambda^\prime$}
\noLine
\UnaryInfC{$\Sigma,\Box\Pi\Rightarrow A,\Lambda$}
\AxiomC{$\lambda$}
\noLine
\UnaryInfC{$\Box\Pi\Rightarrow A$}
\LeftLabel{$\Box$}
\RightLabel{ ,}
\BinaryInfC{$\Sigma,\Box\Pi\Rightarrow \Box A, \Lambda$}
\DisplayProof
\end{gather*}
where $\lambda^\prime$ is an $\infty$--proof for the sequent $\Gamma,\Box\Pi\Rightarrow A,\Delta$ obtained from $\lambda$ by Lemma \ref{weakening}.

The cases of other inference rules being last in $\pi$ are straightforward, so we omit them.

\end{proof}

\begin{lem}\label{weakening}
The rule
\begin{gather*}
\AxiomC{$\Gamma\Rightarrow\Delta$}
\LeftLabel{$\mathsf{weak}$}
\UnaryInfC{$\Pi,\Gamma\Rightarrow\Delta,\Sigma$}
\DisplayProof
\end{gather*}
is admissible in $\mathsf{Grz_{Seq}} $.
\end{lem}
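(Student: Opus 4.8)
The plan is to argue by induction on the height of a (cut-free) derivation of $\Gamma \Rightarrow \Delta$ in $\mathsf{Grz_{Seq}}$, showing that the multiset $\Pi$ can be inserted into the antecedent and $\Sigma$ into the succedent of every sequent occurring in the derivation. The guiding observation is that in each initial sequent and each rule of $\mathsf{Grz_{Seq}}$ the side formulas $\Gamma, \Delta$ occur schematically, so the formulas of $\Pi$ and $\Sigma$ can always be absorbed into this context.

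For the base case, an initial sequent has the form $\Gamma', A \Rightarrow A, \Delta'$ or $\Gamma', \bot \Rightarrow \Delta'$, and weakening it by $\Pi$ and $\Sigma$ yields $\Pi, \Gamma', A \Rightarrow A, \Delta', \Sigma$ or $\Pi, \Gamma', \bot \Rightarrow \Delta', \Sigma$, which is again an initial sequent of the same shape, since the distinguished occurrence of $A$ (respectively $\bot$) is left intact.

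For the inductive step I would case on the last rule. For $\mathsf{\to_L}$, $\mathsf{\to_R}$, and $\mathsf{refl}$ the treatment is uniform: apply the induction hypothesis to weaken each premise by the same $\Pi$ and $\Sigma$, and then reapply the rule, the enlarged context posing no problem since the principal and auxiliary formulas stay in place. The case of $\mathsf{\Box_{Grz}}$ is the only one worth singling out, and it is in fact the simplest: writing the inference with premise $\Box\Pi', \Box(A \to \Box A) \Rightarrow A$ and conclusion $\Gamma, \Box\Pi' \Rightarrow \Box A, \Delta$, where $\Box\Pi'$ is the boxed context of the rule, its side context $\Gamma$ on the left and $\Delta$ on the right is already entirely arbitrary. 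Hence one need not invoke the induction hypothesis at all: simply reapply $\mathsf{\Box_{Grz}}$ to the unchanged premise, now with left context $\Pi, \Gamma$ and right context $\Delta, \Sigma$, which immediately gives the weakened conclusion $\Pi, \Gamma, \Box\Pi' \Rightarrow \Box A, \Delta, \Sigma$.

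There is no genuine obstacle here; this is the standard height-preserving admissibility of weakening. The only thing to keep track of is that each rule instance produced after weakening is still a legitimate instance of the corresponding rule of $\mathsf{Grz_{Seq}}$, and for $\mathsf{\Box_{Grz}}$ this is transparent precisely because its contexts are unrestricted, so the weakening formulas — even boxed ones occurring in $\Pi$ — may be placed in the arbitrary side context rather than in the transmitted boxed context $\Box\Pi'$.
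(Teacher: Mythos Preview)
Your argument is correct and matches the paper's approach: the paper simply records ``Standard induction on the structure of a proof of $\Gamma\Rightarrow\Delta$'' without spelling out the cases, and what you have written is precisely that standard induction. Your handling of $\mathsf{\Box_{Grz}}$, absorbing the weakening formulas into the unrestricted side contexts rather than the transmitted boxed context, is exactly the right observation.
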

\begin{proof}
Standard induction on the structure of a proof of $\Gamma\Rightarrow\Delta$.
\end{proof}

For a sequent $\Gamma\Rightarrow\Delta$, let $Sub(\Gamma\Rightarrow\Delta)$ be the set of all subformulas of the formulas from $\Gamma \cup\Delta$.
For  a finite set of formulas $\Lambda$, set $\Lambda^\ast:=\{\Box(A\to\Box A)\mid A\in\Lambda\}$.

\begin{lem} \label{translation}
If $\mathsf{Grz_\infty}\vdash \Gamma\Rightarrow\Delta$, then $\mathsf{Grz_{Seq}} \vdash \Lambda^\ast,\Gamma\Rightarrow\Delta$ for any finite set of formulas $\Lambda$.
\end{lem}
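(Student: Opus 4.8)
The plan is to turn the (cut-free) $\infty$-proof $\pi$ of $\Gamma\Rightarrow\Delta$ into a finite $\mathsf{Grz_{Seq}}$-derivation by translating its main fragment rule-by-rule and recursing into the right premises of the rule $\mathsf{\Box}$, using $\Lambda$ as a record of exactly those boxed subformulas for which the rule $\mathsf{\Box_{Grz}}$ has already been fired. First I would fix $S:=Sub(\Gamma\Rightarrow\Delta)$ and record the subformula property of $\mathsf{Grz_\infty}$: every sequent occurring in $\pi$ contains only formulas from $S$, and in particular every principal formula $\Box A$ of an application of $\mathsf{\Box}$ satisfies $\Box A\in S$. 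This keeps the recorded set $\Lambda$ confined to the finite set $\{A\mid\Box A\in S\}$ throughout the construction, which is what makes a well-founded measure available.

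The argument then proceeds by induction on the lexicographic pair $(\,\lvert\{A\mid\Box A\in S\}\setminus\Lambda\rvert,\ \lvert\pi\rvert\,)$. For initial sequents and for the rules $\mathsf{\to_L}$, $\mathsf{\to_R}$, $\mathsf{refl}$ — which are common to both calculi — I would simply reproduce the rule with $\Lambda^\ast$ adjoined to the antecedent (citing Lemma \ref{weakening} where convenient); these steps strictly decrease the local height and leave $\Lambda$ unchanged, so the inner induction applies. All the content is in an application of $\mathsf{\Box}$ with premises $\pi_1$ (proving $\Gamma,\Box\Pi\Rightarrow A,\Delta$) and $\pi_2$ (proving $\Box\Pi\Rightarrow A$) and conclusion $\Gamma,\Box\Pi\Rightarrow\Box A,\Delta$, where I split into two cases.

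If $A\notin\Lambda$, I would apply $\mathsf{\Box_{Grz}}$ with matched boxed context $\Lambda^\ast\cup\Box\Pi$, reducing the goal $\Lambda^\ast,\Gamma,\Box\Pi\Rightarrow\Box A,\Delta$ to its premise $\Lambda^\ast,\Box\Pi,\Box(A\to\Box A)\Rightarrow A$, which is precisely $(\Lambda\cup\{A\})^\ast,\Box\Pi\Rightarrow A$; since $\Box A\in S$, passing to $\pi_2$ with the enlarged set $\Lambda\cup\{A\}$ strictly decreases the first coordinate of the measure, so the induction hypothesis closes this case (the unused premise $\pi_1$ is discarded). If instead $A\in\Lambda$, then $\Box(A\to\Box A)\in\Lambda^\ast$, and I would keep $\Lambda$ fixed and fall back on $\pi_1$: apply $\mathsf{refl}$ to $\Box(A\to\Box A)$ and then $\mathsf{\to_L}$ to $A\to\Box A$, whose left premise $\Lambda^\ast,\Gamma,\Box\Pi,\Box A\Rightarrow\Box A,\Delta$ is an initial sequent of $\mathsf{Grz_{Seq}}$, and whose right premise $\Lambda^\ast,\Gamma,\Box\Pi\Rightarrow A,\Box A,\Delta$ follows, via Lemma \ref{weakening}, from the induction hypothesis applied to $\pi_1$ with the same $\Lambda$ (its local height is strictly smaller than $\lvert\pi\rvert$).

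The main obstacle — and the reason a naive ``unfold the $\infty$-proof'' strategy fails — is exactly the case $A\in\Lambda$: here the infinite branch revisits a boxed formula already in play, so recursing into $\pi_2$ would not decrease the measure and need not terminate. The decisive observation is that once $\Box(A\to\Box A)$ is present in the antecedent one can discharge $\Box A$ without re-firing $\mathsf{\Box_{Grz}}$, descending instead into the local-height-smaller left premise $\pi_1$; this is what reconciles the infinitary well-foundedness condition (every infinite branch passes through a right premise of $\mathsf{\Box}$ infinitely often) with a genuinely finite $\mathsf{Grz_{Seq}}$-derivation. I would then note that the lexicographic measure is well-founded because $\{A\mid\Box A\in S\}$ is finite, and conclude the lemma by instantiating the general claim at the root of $\pi$ with the given $\Lambda$.
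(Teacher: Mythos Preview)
Your proposal is correct and follows essentially the same route as the paper's proof: the same lexicographic induction, the same treatment of the propositional rules, and exactly the same two-way split at $\mathsf{\Box}$, using $\pi_2$ with $\Lambda\cup\{A\}$ and $\mathsf{\Box_{Grz}}$ when $A\notin\Lambda$, and discharging via $\mathsf{refl}$/$\mathsf{\to_L}$ on $\Box(A\to\Box A)$ together with $\pi_1$ when $A\in\Lambda$. The only cosmetic difference is the outer measure: the paper takes $\lvert Sub(\text{current sequent})\setminus\Lambda\rvert$, whereas you fix $S$ at the root and use $\lvert\{A\mid\Box A\in S\}\setminus\Lambda\rvert$ via the subformula property; both give the required strict decrease in the $A\notin\Lambda$ branch.
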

\begin{proof}
Assume $\pi$ is an $\infty$--proof of the sequent $\Gamma\Rightarrow\Delta$ in $\mathsf{Grz}_\infty$ and $\Lambda$ is a finite set of formulas.
By induction on the number of elements in the finite set $Sub(\Gamma\Rightarrow\Delta)\setminus \Lambda$ with a subinduction on $\lvert \pi \rvert$, we prove $\mathsf{Grz_{Seq}} \vdash \Lambda^\ast,\Gamma\Rightarrow\Delta$. 


If $\lvert \pi \rvert=0$, then $\Gamma\Rightarrow\Delta$ is an initial sequent. We see that the sequent $\Lambda^\ast,\Gamma\Rightarrow\Delta$ is an initial sequent and it is provable in $\mathsf{Grz_{Seq}}$.
Otherwise, consider the last application of an inference rule in $\pi$. 

Case 1. Suppose that $\pi$ has the form
\begin{gather*}
\AxiomC{$\pi^\prime$}
\noLine
\UnaryInfC{$\Gamma,A\Rightarrow B,\Sigma$}
\LeftLabel{$\mathsf{\to_R}$}
\RightLabel{ ,}
\UnaryInfC{$\Gamma\Rightarrow A\to B,\Sigma$}
\DisplayProof
\end{gather*}
where $A\to B,\Sigma = \Delta$.
Notice that $\lvert \pi^\prime \rvert < \lvert \pi \rvert $. By the induction hypothesis for $\pi^\prime$ and $\Lambda$, the sequent $\Lambda^\ast,\Gamma,A\Rightarrow B,\Sigma$ is provable in  $\mathsf{Grz_{Seq}}$. 
Applying the rule ($\mathsf{\to_R}$) to it, we obtain that the sequent $\Lambda^\ast,\Gamma\Rightarrow\Delta$ is provable in $\mathsf{Grz_{Seq}}$.

Case 2. Suppose that $\pi$ has the form
\begin{gather*}
\AxiomC{$\pi^\prime$}
\noLine
\UnaryInfC{$\Sigma, B\Rightarrow \Delta$}
\AxiomC{$\pi^{\prime\prime}$}
\noLine
\UnaryInfC{$\Sigma \Rightarrow A,\Delta$}
\LeftLabel{$\mathsf{\to_L}$}
\RightLabel{ ,}
\BinaryInfC{$\Sigma, A\to B\Rightarrow \Delta$}
\DisplayProof
\end{gather*}
where $\Sigma, A\to B = \Gamma$. We see that $\lvert \pi^\prime \rvert < \lvert \pi \rvert $. By the induction hypothesis for $\pi^\prime$ and $\Lambda$, the sequent $\Lambda^\ast,\Sigma, B\Rightarrow \Delta$ is provable in  $\mathsf{Grz_{Seq}}$. Analogously, we have $\mathsf{Grz_{Seq}} \vdash \Lambda^\ast,\Sigma \Rightarrow A,\Delta$. Applying the rule ($\mathsf{\to_L}$), we obtain that the sequent $\Lambda^\ast,\Sigma, A\to B \Rightarrow\Delta$ is provable in $\mathsf{Grz_{Seq}}$.

Case 3. Suppose that $\pi$ has the form
\begin{gather*}
\AxiomC{$\pi^\prime$}
\noLine
\UnaryInfC{$\Sigma,A,\Box A\Rightarrow \Delta$}
\LeftLabel{$\mathsf{refl}$}
\RightLabel{ ,}
\UnaryInfC{$\Sigma,\Box A\Rightarrow \Delta$}
\DisplayProof
\end{gather*}
where $\Sigma, \Box A = \Gamma$. We see that $\lvert \pi^\prime \rvert < \lvert \pi \rvert $. By the induction hypothesis for $\pi^\prime$ and $\Lambda$, the sequent $\Lambda^\ast,\Sigma, A, \Box A\Rightarrow \Delta$ is provable in  $\mathsf{Grz_{Seq}}$. Applying the rule ($\mathsf{refl}$), we obtain $\mathsf{Grz_{Seq}} \vdash \Lambda^\ast,\Sigma, \Box A\Rightarrow\Delta$. 

Case 4. Suppose that $\pi$ has the form
\begin{gather*}
\AxiomC{$\pi^\prime$}
\noLine
\UnaryInfC{$\Phi, \Box \Pi \Rightarrow A, \Sigma$}
\AxiomC{$\pi^{\prime\prime}$}
\noLine
\UnaryInfC{$\Box \Pi \Rightarrow A$}
\LeftLabel{$\mathsf{\Box}$}
\RightLabel{ ,}
\BinaryInfC{$\Phi, \Box \Pi \Rightarrow \Box A, \Sigma$}
\DisplayProof
\end{gather*}
where $\Phi, \Box \Pi = \Gamma$ and $\Box A, \Sigma =\Delta$.

Subcase 4.1: the formula $A$ belongs to $\Lambda$. We see that $\lvert \pi^\prime \rvert < \lvert \pi \rvert $. By the induction hypothesis for $\pi^\prime$ and $\Lambda$, the sequent $\Lambda^\ast,\Phi, \Box \Pi \Rightarrow A, \Sigma$ is provable in  $\mathsf{Grz_{Seq}}$. 
Then we see
\begin{gather*}
\AxiomC{$\mathsf{Ax}$}
\noLine
\UnaryInfC{$\Lambda^\ast,\Box A,\Phi, \Box \Pi \Rightarrow \Box A, \Sigma$}
\AxiomC{$\Lambda^\ast,\Phi, \Box \Pi \Rightarrow A,  \Sigma$}
\LeftLabel{$\mathsf{weak}$}
\UnaryInfC{$\Lambda^\ast,\Phi, \Box \Pi \Rightarrow A,  \Box A,\Sigma$}
\LeftLabel{$\mathsf{\to_L}$}
\BinaryInfC{$(\Lambda\backslash\{A\})^\ast,A\to\Box A,\Box(A\to\Box A),\Phi, \Box \Pi \Rightarrow \Box A, \Sigma$}
\LeftLabel{$\mathsf{refl}$}
\RightLabel{ ,}
\UnaryInfC{$(\Lambda\backslash\{A\})^\ast,\Box(A\to\Box A),\Phi, \Box \Pi \Rightarrow \Box A, \Sigma$}
\DisplayProof
\end{gather*}
where the rule ($\mathsf{weak}$) is admissible by Lemma \ref{weakening}.

Subcase 4.2: the formula $A$ doesn't belong to $\Lambda$. We have that the number of elements in $Sub(\Box\Pi\Rightarrow A)\setminus(\Lambda\cup \{A\})$ is strictly less than the number of elements in $Sub(\Phi, \Box \Pi \Rightarrow \Box A, \Sigma)\setminus\Lambda$. Therefore, by the induction hypothesis for $\pi^{\prime\prime}$ and $\Lambda\cup \{A\}$, the sequent $\Lambda^\ast,\Box(A\to\Box A),\Box \Pi \Rightarrow A$ is provable in  $\mathsf{Grz_{Seq}}$. Then we have
\begin{gather*}
\AxiomC{$\Lambda^\ast,\Box(A\to\Box A),\Box \Pi \Rightarrow A$}
\LeftLabel{$\mathsf{\Box_{Grz}}$}
\RightLabel{ .}
\UnaryInfC{$\Lambda^\ast,\Phi, \Box \Pi \Rightarrow \Box A, \Sigma$}
\DisplayProof
\end{gather*}
\end{proof}
From Lemma \ref{translation} we immediately obtain the following theorem.
\begin{thm}\label{inftoseq}
If $\mathsf{Grz_\infty}\vdash \Gamma\Rightarrow\Delta$, then $\mathsf{Grz_{Seq}} \vdash \Gamma\Rightarrow\Delta$.
\end{thm}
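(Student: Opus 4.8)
The plan is to read off the theorem as the special case $\Lambda=\emptyset$ of Lemma~\ref{translation}. By definition $\Lambda^\ast=\{\Box(A\to\Box A)\mid A\in\Lambda\}$, so taking $\Lambda=\emptyset$ gives $\emptyset^\ast=\emptyset$. Hence the hypothesis $\mathsf{Grz_\infty}\vdash\Gamma\Rightarrow\Delta$ together with Lemma~\ref{translation} yields $\mathsf{Grz_{Seq}}\vdash\emptyset^\ast,\Gamma\Rightarrow\Delta$, which is literally $\mathsf{Grz_{Seq}}\vdash\Gamma\Rightarrow\Delta$. There is no residual work to do at this stage.

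The only point worth stressing is why the theorem could not have been proved directly by the same induction without the auxiliary multiset $\Lambda^\ast$. In Subcase~4.2 of Lemma~\ref{translation} — the case where the last rule is $(\mathsf{\Box})$ and the principal boxed formula $A$ is new — one descends into the right premise $\Box\Pi\Rightarrow A$ with the \emph{enlarged} parameter $\Lambda\cup\{A\}$, so that the extra context $(\Lambda\cup\{A\})^\ast$ now carries the Grzegorczyk assumption $\Box(A\to\Box A)$. This is exactly what licenses the final application of $\mathsf{\Box_{Grz}}$ in that subcase. Thus the strengthened statement with a free parameter $\Lambda$ is the genuine inductive invariant, and the present theorem is merely its instantiation at the base value $\Lambda=\emptyset$.

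Accordingly, I expect no real obstacle here: all the difficulty has been front-loaded into Lemma~\ref{translation}, whose induction is driven by the number of elements of $Sub(\Gamma\Rightarrow\Delta)\setminus\Lambda$ (which strictly decreases in Subcase~4.2 even though the local height may grow) with a subinduction on $\lvert\pi\rvert$ for the remaining cases. Once that lemma is in hand, the theorem follows in one line by specialization.
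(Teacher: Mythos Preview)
Your proof is correct and matches the paper's approach exactly: the paper states that the theorem follows ``immediately'' from Lemma~\ref{translation}, which is precisely your specialization to $\Lambda=\emptyset$. Your additional commentary on why the parameter $\Lambda$ is needed for the induction is accurate and helpful, though the paper does not spell this out at the point of the theorem.
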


Theorem \ref{cutelimgrz} is now established as a direct consequence of Theorem \ref{seqtoinfcut}, Theorem \ref{infcuttoinf}, and Theorem \ref{inftoseq}.

\section{Conclusion and Future Work}\label{s6}

Recall that the Craig interpolation property for a logic $\mathsf{L}$ says that if $A$ implies $B$, then there is an interpolant, that is, a formula $I$ containing only common variables of $A$ and $B$ such that $A$ implies $I$ and $I$ implies $B$. The Lyndon interpolation property is a strengthening of the Craig one that also takes into consideration negative and positive occurrences of the shared propositional variables; that is, the variables occurring in $I$ positively (negatively) must also occur both in $A$ and $B$ positively (negatively).

Though the Grzegorczyk logic has the Lyndon interpolation property \cite{Maks2}, there were seemingly no syntactic proofs of this result.  
It is unclear how Lyndon interpolation can be obtained from previously introduced sequent systems for $\mathsf{Grz}$ \cite{Avron,Borga,Negri} by direct proof-theoretic arguments because these systems contain inference rules in which a polarity change occurs under the passage from the principal formula in the conclusion to its immediate ancestors in the premise. Using our system $\mathsf{Grz}_\infty$ we believe that we can obtain a syntactic proof of Lyndon interpolation for the modal Grzegorczyk logic as an application of our cut-elimination theorem.

We also believe that every provable $\mathsf{Grz}_\infty$ sequent has a proof that is a regular tree (has only finite amout of distinct subtrees). This gives a possibility of proof system for the logic $\mathsf{Grz}$ with cyclical proofs, like the system introduced in \cite{Sham}.

\section{Acknoledgements}
The article was prepared within the framework of the Basic Research Program at the National Research University Higher School of Economics (HSE) and supported within the framework of a subsidy by the Russian Academic Excellence Project '5-100'. Both authors also acknowledge support from the Russian Foundation for Basic Research (grant no. 15-01-09218a).

\newpage
\section*{Appendix.}
\subsection*{Proof of Lemma \ref{smallcut}}
Assume we have two $\infty$-proofs $\pi^\prime$ and $\pi^{\prime\prime}$ from $\mathcal P_1$. If there is no application of the cut rule to these $\infty$-proofs with the cut formula $p$, then we put $\mathcal R_{p}(\pi^\prime,\pi^{\prime\prime}) := \pi^\prime$. In the converse case, there is a sequent $\Gamma\Rightarrow \Delta$ such that $\pi^\prime$ is an $\infty$-proof of $\Gamma\Rightarrow \Delta, p$ and $\pi^{\prime\prime}$ is an $\infty$-proof of $p, \Gamma\Rightarrow \Delta$.
We define $\mathcal R_{p}(\pi^\prime,\pi^{\prime\prime})$ by induction on $\lvert \pi^\prime \rvert$.

If $\lvert \pi^\prime\rvert=0$, then $\Gamma\Rightarrow \Delta, p$ is an initial sequent. Suppose that $\Gamma\Rightarrow \Delta$ is also an initial sequent. Then $\mathcal R_{p}(\pi^\prime,\pi^{\prime\prime})$ is defined as the $\infty$-proof consisting only of this initial sequent. Otherwise, $\Gamma$ has the form $p,\Phi$, and $\pi^{\prime\prime}$ is an $\infty$-proof of $p,p,\Phi \Rightarrow \Delta$. Applying the nonexpansive mapping $\mathit{acl}_p$ from Lemma \ref{weakcontraction}, we put $\mathcal R_{p}(\pi^\prime,\pi^{\prime\prime}) := \mathit{acl}_p (\pi^{\prime\prime})$.   

Now suppose that $\lvert \pi^\prime \rvert >0$. We consider the last application of an inference rule in $\pi^\prime$. 

Case 1. The $\infty$-proof $\pi^\prime$ has the form
\begin{gather*}
\AxiomC{$\pi^\prime_0$}
\noLine
\UnaryInfC{$\Gamma,A\Rightarrow B,\Sigma,p$}
\LeftLabel{$\mathsf{\to_R}$}
\RightLabel{ ,}
\UnaryInfC{$\Gamma\Rightarrow A\to B,\Sigma,p$}
\DisplayProof
\end{gather*}
where $A\to B,\Sigma = \Delta$.
Notice that $\lvert \pi^\prime_0 \rvert < \lvert \pi^\prime \rvert $. In addition, $\pi^{\prime\prime}$ is an $\infty$-proof of $p,\Gamma\Rightarrow A\to B,\Sigma$. We define $\mathcal R_{p}(\pi^\prime,\pi^{\prime\prime}) $ as
\begin{gather*}
\AxiomC{$\mathcal{R}_p(\pi^\prime_0, \mathit{i}_{A \to B}(\pi^{\prime\prime}))$}
\noLine
\UnaryInfC{$\Gamma,A\Rightarrow B,\Sigma$}
\LeftLabel{$\mathsf{\to_R}$}
\RightLabel{ ,}
\UnaryInfC{$\Gamma\Rightarrow A\to B,\Sigma$}
\DisplayProof
\end{gather*}
where $\mathit{i}_{A\to B}$ is a nonexpansive mapping from Lemma \ref{inversion}.

Case 2. The $\infty$-proof $\pi^\prime$ has the form
\begin{gather*}
\AxiomC{$\pi^\prime_0$}
\noLine
\UnaryInfC{$\Sigma, B\Rightarrow \Delta, p$}
\AxiomC{$\pi^{\prime}_1$}
\noLine
\UnaryInfC{$\Sigma \Rightarrow A,\Delta, p$}
\LeftLabel{$\mathsf{\to_L}$}
\RightLabel{ ,}
\BinaryInfC{$\Sigma, A\to B\Rightarrow \Delta, p$}
\DisplayProof
\end{gather*}
where $\Sigma, A\to B = \Gamma$. We see that $\lvert \pi^\prime_0 \rvert < \lvert \pi^\prime \rvert $ and $\lvert \pi^\prime_1 \rvert < \lvert \pi^\prime \rvert $. Also, 
$\pi^{\prime\prime}$ is an $\infty$-proof of $p,\Sigma, A\to B\Rightarrow \Delta$. We define $\mathcal R_{p}(\pi^\prime,\pi^{\prime\prime}) $ as
\begin{gather*}
\AxiomC{$\mathcal{R}_p (\pi^\prime_0, \mathit{li}_{A\to B} (\pi^{\prime\prime}))$}
\noLine
\UnaryInfC{$\Sigma, B\Rightarrow \Delta, p$}
\AxiomC{$\mathcal{R}_p (\pi^{\prime}_1, \mathit{ri}_{A\to B} (\pi^{\prime\prime}))$}
\noLine
\UnaryInfC{$\Sigma \Rightarrow A,\Delta, p$}
\LeftLabel{$\mathsf{\to_L}$}
\RightLabel{ ,}
\BinaryInfC{$\Sigma, A\to B\Rightarrow \Delta, p$}
\DisplayProof
\end{gather*}
where $\mathit{li}_{A\to B}$ and $\mathit{ri}_{A\to B}$ are nonexpansive mappings from Lemma \ref{inversion}.

Case 3. The $\infty$-proof $\pi^\prime$ has the form
\begin{gather*}
\AxiomC{$\pi^\prime_0$}
\noLine
\UnaryInfC{$\Sigma,A,\Box A\Rightarrow \Delta,p$}
\LeftLabel{$\mathsf{refl}$}
\RightLabel{ ,}
\UnaryInfC{$\Sigma,\Box A\Rightarrow \Delta,p$}
\DisplayProof
\end{gather*}
where $\Sigma, \Box A = \Gamma$. We have that $\lvert \pi^\prime \rvert < \lvert \pi \rvert $. 
Define $\mathcal R_{p}(\pi^\prime,\pi^{\prime\prime}) $ as
\begin{gather*}
\AxiomC{$\mathcal{R}_p(\pi^\prime_0, \mathit{wk}_{ A, \emptyset} (\pi^{\prime\prime})$}
\noLine
\UnaryInfC{$\Sigma,A,\Box A\Rightarrow \Delta$}
\LeftLabel{$\mathsf{refl}$}
\RightLabel{ ,}
\UnaryInfC{$\Sigma,\Box A\Rightarrow \Delta$}
\DisplayProof
\end{gather*}
where $\mathit{wk}_{A, \emptyset}$ is the nonexpansive mapping from Lemma \ref{strongweakening}.

Case 4. Now consider the final case when $\pi^\prime$ has the form
\begin{gather*}
\AxiomC{$\pi^\prime_0$}
\noLine
\UnaryInfC{$\Phi, \Box \Pi \Rightarrow A, \Sigma, p$}
\AxiomC{$\pi^{\prime}_1$}
\noLine
\UnaryInfC{$\Box \Pi \Rightarrow A$}
\LeftLabel{$\mathsf{\Box}$}
\RightLabel{ ,}
\BinaryInfC{$\Phi, \Box \Pi \Rightarrow \Box A, \Sigma, p$}
\DisplayProof
\end{gather*}
where $\Phi, \Box \Pi = \Gamma$ and $\Box A, \Sigma =\Delta$. Notice that $\lvert \pi^\prime_0 \rvert < \lvert \pi^\prime \rvert $. In addition, $\pi^{\prime\prime}$ is an $\infty$-proof of $p,\Phi, \Box \Pi \Rightarrow \Box A, \Sigma$. We define $\mathcal R_{p}(\pi^\prime,\pi^{\prime\prime}) $ as
\begin{gather*}
\AxiomC{$\mathcal{R}_p(\pi^\prime_0, \mathit{li}_{\: \Box A}(\pi^{\prime\prime}))$}
\noLine
\UnaryInfC{$\Phi, \Box \Pi \Rightarrow A, \Sigma$}
\AxiomC{$\pi^{\prime}_1$}
\noLine
\UnaryInfC{$\Box \Pi \Rightarrow A$}
\LeftLabel{$\mathsf{\Box}$}
\RightLabel{ ,}
\BinaryInfC{$\Phi, \Box \Pi \Rightarrow \Box A, \Sigma$}
\DisplayProof
\end{gather*}
where $\mathit{li}_{\: \Box A}$ is a nonexpansive mapping from Lemma \ref{inversion}.

The mapping $\mathcal{R}_p$ is well defined. It remains
to check that $\mathcal{R}_p$ is nonexpansive, i.e. for any $n\in \mathbb{N}$ and any $\pi^\prime$, $\pi^{\prime\prime}$, $\tau^\prime$, $\tau^{\prime\prime}$ from $ \mathcal P_0$ 
$$(\pi^\prime \sim_n \tau^\prime \wedge \pi^{\prime\prime} \sim_n \tau^{\prime\prime}) \Rightarrow \mathcal{R}_p(\pi^\prime, \pi^{\prime\prime}) \sim_n \mathcal{R}_p(\tau^\prime, \tau^{\prime\prime})\;.  $$
This condition is checked by structural induction 
on the inductively defined relation $\pi^\prime \sim_n \tau^\prime$ 
in a straightforward way. So we omit further details.

\subsection*{Proof of Lemma \ref{boxcut}}

Assume we have two $\infty$-proofs $\pi^\prime$ and $\pi^{\prime\prime}$ from $\mathcal P_1$. If there is no application of the cut rule to these $\infty$-proofs with the cut formula $\Box B$, then we put $\mathcal{R}_{\Box B}(\pi^\prime,\pi^{\prime\prime}) := \pi^\prime$. In the converse case, we define $\mathcal{R}_{\Box B}(\pi^\prime,\pi^{\prime\prime})$ by induction on $\lvert \pi^\prime \rvert + \lvert \pi^{\prime\prime} \rvert$.

If $\lvert \pi^\prime\rvert=0$ or $\lvert \pi^{\prime\prime} \rvert=0$, then $\Gamma\Rightarrow \Delta$ is an initial sequent. Then $\mathcal R_{\Box B}(\pi^\prime,\pi^{\prime\prime})$ is defined as the $\infty$-proof consisting only of this initial sequent. 

Now suppose that $\lvert \pi^\prime \rvert >0$. We consider the last application of an inference rule in $\pi^\prime$. If the principal formula of this inference is not $\Box B$, then $\mathcal R_{\Box B}(\pi^\prime,\pi^{\prime\prime}) $ is defined similarly to the four cases of Lemma \ref{smallcut}. 

We can now assume that $\pi^\prime$ has the form
\begin{gather*}
\AxiomC{$\pi^\prime_0$}
\noLine
\UnaryInfC{$\Phi, \Box \Pi \Rightarrow B, \Sigma$}
\AxiomC{$\pi^{\prime}_1$}
\noLine
\UnaryInfC{$\Box \Pi \Rightarrow B$}
\LeftLabel{$\mathsf{\Box}$}
\RightLabel{ ,}
\BinaryInfC{$\Phi, \Box \Pi \Rightarrow \Box B, \Sigma$}
\DisplayProof
\end{gather*}

Consider the last application of an inference rule in $\pi^{\prime\prime}$. If the rule used was $\mathsf{\to_L}$, $\mathsf{\to_R}$, $\mathsf{refl}$ with the principal formula being not $\Box B$, or the rule $\mathsf{\Box}$ without the formula $\Box B$ in the right premise, then $\mathcal R_{\Box B}(\pi^\prime,\pi^{\prime\prime})$ can also be defined similarly to the previous case.

Otherwise, we have the following cases. 

Case A. The $\infty$-proof $\pi^{\prime\prime}$ has the form
\begin{gather*}
\AxiomC{$\pi^{\prime\prime}_0$}
\noLine
\UnaryInfC{$\Gamma,B,\Box B\Rightarrow \Delta$}
\LeftLabel{$\mathsf{refl}$}
\RightLabel{ .}
\UnaryInfC{$\Gamma,\Box B\Rightarrow \Delta$}
\DisplayProof
\end{gather*}

Since that $\lvert \pi^{\prime\prime}_0 \rvert < \lvert \pi^{\prime\prime} \rvert $, we can define $\mathcal R_{\Box B}(\pi^\prime,\pi^{\prime\prime}) $ as
$$\mathcal R_{B}(\pi^\prime_0,\mathcal R_{\Box B}(\pi^\prime,\pi^{\prime\prime}_0)).$$

Case B. The $\infty$-proof $\pi^{\prime\prime}$ has the form
\begin{gather*}
\AxiomC{$\pi^{\prime\prime}_0$}
\noLine
\UnaryInfC{$\Phi^\prime, \Box B, \Box \Pi^\prime \Rightarrow C, \Sigma^\prime$}
\AxiomC{$\pi^{\prime\prime}_1$}
\noLine
\UnaryInfC{$\Box B,\Box \Pi^\prime \Rightarrow C$}
\LeftLabel{$\mathsf{\Box}$}
\RightLabel{ ,}
\BinaryInfC{$\Phi^\prime, \Box B, \Box \Pi^\prime \Rightarrow \Box C, \Sigma^\prime$}
\DisplayProof
\end{gather*}

Since $\lvert \pi^{\prime\prime}_0 \rvert < \lvert \pi^{\prime\prime} \rvert $ and the sequents $\Phi^\prime, \Box \Pi^\prime \Rightarrow \Box C, \Sigma^\prime$ and $\Gamma\Rightarrow\Delta$ are equal, we can define $\mathcal R_{\Box B}(\pi^\prime,\pi^{\prime\prime}) $ as
\begin{gather*}
\AxiomC{$\mathcal R_{\Box B}(\pi',\pi''_0)$}
\noLine
\UnaryInfC{$\Phi^\prime, \Box \Pi^\prime \Rightarrow C, \Sigma^\prime$}
\AxiomC{$\mathit{wk_{\Box\Pi^\prime\backslash\Box\Pi,C}}(\pi^\prime_1)$}
\noLine
\UnaryInfC{$\Box\Pi\cup\Box\Pi^\prime\Rightarrow B,C$}
\AxiomC{$\mathit{wk_{\Box\Pi^\prime\backslash\Box\Pi,\varnothing}}(\pi^\prime_1)$}
\noLine
\UnaryInfC{$\Box\Pi\cup\Box\Pi^\prime\Rightarrow B$}
\LeftLabel{$\mathsf{\Box}$}
\BinaryInfC{$\Box\Pi\cup\Box\Pi^\prime\Rightarrow \Box B,C$}
\AxiomC{$\mathit{wk_{\Box\Pi\backslash\Box\Pi^\prime,\varnothing}}(\pi^{\prime\prime}_1)$}
\noLine
\UnaryInfC{$\Box\Pi\cup\Box\Pi^\prime,\Box B\Rightarrow C$}
\LeftLabel{$\mathsf{cut}$}
\BinaryInfC{$\Box\Pi\cup\Box\Pi^\prime\Rightarrow C$}
\LeftLabel{$\mathsf{\Box}$}
\BinaryInfC{$\Phi^\prime, \Box \Pi^\prime \Rightarrow \Box C, \Sigma^\prime$}
\DisplayProof
\end{gather*} 

where $\mathit{wk}_{-,-}$ is a nonexpansive mapping from Lemma \ref{strongweakening}. Since the instance of the rule $\mathsf{cut}$ is not in the main fragment, this proof is in $\mathcal P_1$.

\end{document}